\titlespacing*{\section}{0pt}{14pt}{4pt}
\titlespacing*{\subsection}{0pt}{8pt}{3pt}
\patchcmd{\ttlh@hang}{\parindent\z@}{\parindent\z@\leavevmode}{}{}
\patchcmd{\ttlh@hang}{\noindent}{}{}{}
\def\maketimestamp{\count255=\time
\divide\count255 by 60\relax
\edef\thetime{\the\count255:}%
\multiply\count255 by-60\relax
\advance\count255 by\time
\edef\thetime{\thetime\ifnum\count255<10 0\fi\the\count255}
\edef\thedate{\number\day-\ifcase\month\or Jan\or Feb\or Mar\or
             Apr\or May\or Jun\or Jul\or Aug\or Sep\or Oct\or
             Nov\or Dec\fi-\number\year}
\def\timstamp{\hbox to\hsize{\tt\hfil\thedate\hfil\thetime\hfil}}}
\numberwithin{equation}{section}  
\newtheorem{theorem}{Theorem}[section]
\newtheorem{lemma}[theorem]{Lemma}
\newtheorem{proposition}[theorem]{Proposition}
\newtheorem{corollary}[theorem]{Corollary}
\theoremstyle{definition}
\newtheorem{definition}[theorem]{Definition} 
\newtheorem{example}{Example}[section]
\theoremstyle{remark}
\newtheorem{remark}[example]{Remark} 
\DeclareMathOperator{\supp}{supp} %
\DeclareMathOperator*{\esssup}{ess\,sup} %
\DeclareMathOperator*{\essinf}{ess\,inf} %
\DeclareMathOperator{\exponential}{e}
\newcommandtwoopt{\gaborG}[3][\Lambda][\Gamma]{\mathcal{G}(#3,#1,#2)} 
\newcommand{\myexp}[1]{\exponential^{#1}}
\newcommand{\eps}{\ensuremath{\varepsilon}}
\newcommand*{\numbersys}[1]{\ensuremath{\mathbb{#1}}}
\newcommand*{\R}{\numbersys{R}}
\newcommand*{\Q}{\numbersys{Q}}
\newcommand*{\Z}{\numbersys{Z}}
\newcommand*{\N}{\numbersys{N}}
\newcommand*{\T}{\numbersys{T}}
\newcommand{\itvoc}[2]{\ensuremath{\left({#1},{#2}\right]}} 
\newcommand{\itvcc}[2]{\ensuremath{\left[{#1},{#2}\right]}} %
\newcommand{\itvco}[2]{\ensuremath{\left[{#1},{#2}\right)}} %
\newcommand{\abs}[1]{\ensuremath{\left\lvert#1\right\rvert}}
\newcommand{\abssmall}[1]{\ensuremath{\lvert#1\rvert}}
\newcommand{\absbig}[1]{\ensuremath{\bigl\lvert#1\bigr\rvert}}
\newcommand{\absBig}[1]{\ensuremath{\Bigl\lvert#1\Bigr\rvert}}
\newcommand{\norm}[2][]{\ensuremath{\left\lVert#2\right\rVert_{#1}}}
\newcommand{\innerprod}[3][]{\ensuremath{\left\langle #2,#3\right\rangle_{\! #1}}}
\newcommand{\set}[1]{\ensuremath{\left\lbrace{#1}\right\rbrace}}
\newcommand{\setprop}[2]{\ensuremath{\left\lbrace{#1} : {#2}\right\rbrace}}
\newcommand{\setpropbig}[2]{\ensuremath{\bigl\lbrace{#1} :
      {#2}\bigr\rbrace}}
\newcommand{\setpropsmall}[2]{\ensuremath{\lbrace{#1} : {#2}\rbrace}}
\newcommand\cD{\mathcal{D}} 
\def\moverlay{\mathpalette\mov@rlay}
\def\mov@rlay#1#2{\leavevmode\vtop{%
   \baselineskip\z@skip \lineskiplimit-\maxdimen
   \ialign{\hfil$\m@th#1##$\hfil\cr#2\crcr}}}
\newcommand{\charfusion}[3][\mathord]{
    #1{\ifx#1\mathop\vphantom{#2}\fi
        \mathpalette\mov@rlay{#2\cr#3}
      }
    \ifx#1\mathop\expandafter\displaylimits\fi}
\newcommand{\bigcupdot}{\charfusion[\mathop]{\bigcup}{\cdot}}
\DeclareMathOperator*{\covol}{d}
\DeclareMathOperator*{\AP}{AP}
\DeclareMathOperator*{\aut}{Aut}
\DeclareMathOperator*{\loc}{loc}
\newlength{\dhatheight}
\newcommand{\ghat}{\widehat{G}}
\newcommand*\oline[1]{%
  \vbox{%
    \hrule height 0.5pt
    \kern0.25ex
    \hbox{%
      \kern-0.1em
      \ifmmode#1\else\ensuremath{#1}\fi
      \kern-0.1em
    }
  }
}
\def\blfootnote{\xdef\@thefnmark{}\@footnotetext} 
\def\subjclass{\xdef\@thefnmark{}\@footnotetext}
\long\def\symbolfootnote[#1]#2{\begingroup%
\def\thefootnote{\fnsymbol{footnote}}\footnote[#1]{#2}\endgroup} 
  \renewenvironment{abstract}{%
      \titlepage
      \null\vfil
      \@beginparpenalty\@lowpenalty
      \begin{center}%
        \bfseries \abstractname
        \@endparpenalty\@M
      \end{center}}%
     {\par\vfil\null\endtitlepage}
  \renewenvironment{abstract}{%
      \if@twocolumn
        \section*{\abstractname}%
      \else
        \small
        \list{}{%
          \settowidth{\labelwidth}{\textbf{\abstractname:}}
          \setlength{\leftmargin}{50pt}
          \setlength{\rightmargin}{50pt}
          \setlength{\itemindent}{\labelwidth}
          \addtolength{\itemindent}{\labelsep}
        }
        \item[\textbf{\abstractname:}]

      \fi}
      {\if@twocolumn\else\endlist\fi}
\begin{document}

\title{Criteria for generalized translation-invariant frames}

\date{}

\thispagestyle{plain}

\author{
Jakob Lemvig\footnote{Corresponding author. Technical University of Denmark, Department of Applied Mathematics and Computer Science, Matematiktorvet 303B, 2800 Kgs.\ Lyngby, Denmark, \mbox{E-mail: \protect\url{jakle@dtu.dk}}}
, Jordy Timo van Velthoven\footnote{University of Vienna, Faculty of Mathematics, Oskar-Morgenstern-Platz 1, A-1090 Vienna, Austria,  \mbox{E-mail: \protect\url{jordy-timo.van-velthoven@univie.ac.at}}}
}
\maketitle 

 \blfootnote{2010 {\it Mathematics Subject Classification.} Primary
   42C15, 42C40, Secondary: 43A32, 43A60, 43A70, 94A12.} \blfootnote{{\it Key  words and phrases.}
   almost periodic functions, frame, frame bound, Gabor system, generalized
   translation-invariant system, necessary condition, sufficient
   condition, wavelet system}

\begin{abstract}
  This paper provides new sufficient and necessary conditions for the
  frame property of generalized trans\-la\-tion-in\-vari\-ant
  systems. The conditions are formulated in the Fourier domain and
  consists of estimates involving the upper and lower frame
  bound. Contrary to known conditions of a similar nature, the
  estimates take the phase of the generating functions in
  consideration and not only their modulus. The possibility of phase
  cancellations makes these estimates optimal for tight frames. The
  results on generalized trans\-la\-tion-in\-vari\-ant systems will be
  proved in the setting of locally compact abelian groups, but even
  for Euclidean space and the special case of (composite) wavelet
  systems the results are new.
\end{abstract}

\section{Introduction}
Deriving sufficient and necessary conditions for the frame property of
structured function systems has a long history in time-frequency and
time-scale analysis. In this paper we study a class of structured
function systems known as generalized trans\-la\-tion-in\-vari\-ant
systems. These function systems offer a common framework for discrete
and continuous structured function systems such as Gabor systems, composite wavelet systems 
and wave packet systems, but are also of
independent interest.

\subsection{Overview and contributions}
\label{sec:overview}
The paper aims to derive necessary and sufficient conditions for the
frame and Bessel property of generalized trans\-la\-tion-in\-vari\-ant
systems that are based on properties of the generating functions in
the Fourier domain. The first results similar in nature go back to the
very beginning of modern frame theory and the influential papers by
Daubechies~\cite{Dau90} and Daubechies, Grossmann and
Meyer~\cite{MR836025}. In~\cite{Dau90}, Daubechies provides general
conditions on the generators and parameters of Gabor and wavelet
systems to form a Bessel system or a frame for $L^2 (\mathbb{R})$.
These fundamental results attracted the attention of several groups of
researchers~\cite{heil1989continuous, MR1353539, MR1784021,
  ron1995gramian, MR1460623, chui1993a, chui1993inequalities,
  jing1999on} and lead to improvements and generalizations over the
subsequent decades, whose precise nature is discussed in
Section~\ref{sec:related-work}.

For the sake of clarity, in the remainder of this introduction, we
focus on a subclass of \emph{discrete} function systems, called generalized
shift-invariant systems. 
In the setting of a locally compact abelian group $G$, written additively, a generalized
shift-invariant system in $L^2(G)$ is a countable union of the form  
 \[
 \bigcup_{j \in J} \bigg\{g_j (\cdot - \gamma) \; : \; \gamma \in
 \Gamma_j \bigg\}
 \]
 for a collection of discrete, co-compact subgroups
 $\{\Gamma_j \}_{j \in J}$ in $G$ and a family of functions
 $ \{g_j\}_{j \in J}$ in $L^2 (G)$. A generalized shift-invariant
 system
 $\cup_{j \in J} \{g_j (\cdot - \gamma) \}_{\gamma \in \Gamma_j}$ in
 $L^2 (G)$ is called a \emph{frame} for $L^2 (G)$ whenever there exist
 constants $A, B > 0$, called \emph{frame bounds}, such that
\begin{align} \label{eq:frame_ineq_intro}
A \| f \|^2_2 \leq \sum_{j \in J} \sum_{\gamma \in \Gamma_j} |\langle f, g_j (\cdot - \gamma) \rangle|^2 \leq B \| f \|_2^2
\end{align}
for all $f \in L^2 (G)$. A family $\cup_{j \in J} \{g_j
(\cdot - \gamma) \}_{\gamma \in \Gamma_j}$ satisfying the
upper frame bound is called a \emph{Bessel sequence} and a frame for
which the frame bounds can be chosen equal is called
\emph{tight}. 

Frames in $L^2(G)$ are of interest in applications in, e.g., signal
analysis and functional analysis, as they guarantee unconditionally
$L^2$-convergent and stable expansions of functions in
$L^2(G)$. Indeed, given a frame
$\cup_{j \in J} \{g_j (\cdot - \gamma) \}_{\gamma \in \Gamma_j}$ in
$L^2 (G)$, there exists a system
$\cup_{j \in J} \{\tilde{g}_{j, \gamma} \}_{\gamma \in \Gamma_j}$ such
that every function $f \in L^2 (G)$ possesses an expansion of the form
\begin{equation}
f =  \sum_{j \in J} \sum_{\gamma \in \Gamma_j} \langle f,
\tilde{g}_{j, \gamma} \rangle  g_j (\cdot - \gamma)= \sum_{j \in J}
\sum_{\gamma \in \Gamma_j} \langle f,   g_j (\cdot - \gamma) \rangle
\tilde{g}_{j, \gamma}\label{eq:frame-expansion}
\end{equation}
with unconditional norm convergence.  

Verifying the frame inequalities \eqref{eq:frame_ineq_intro} directly
is often an impossible task. However, for many special cases simple
sufficient and necessary conditions for the frame property are known.
The new criteria presented in this paper will be derived under two
unconditionally convergence properties (called $p$-UCP for
$p=1,\infty$) that will be introduced in Definition \ref{def:ucp}.
The $p$-UCP is a mild convergence property that guarantees almost
periodicity of an auxiliary function $w_f$, essential to our analysis,
that will be introduced in Section~\ref{sec:uncond-conv-prop}. The
$\infty$-UCP will simply be assumed throughout the remainder of this
subsection; here, for simplicity, we also assume that the Lebesgue
Differentiation Theorem holds on $\ghat$.  The frame criteria are
phrased as estimates involving the functions
$t_{\alpha} : \widehat{G} \to \mathbb{C}$,
$\alpha \in \bigcup_{j \in J} \Gamma_j^\perp$, defined, whenever
convergent, as
\[
t_{\alpha} (\omega) = \sum_{j \in J \; : \;\alpha \in
    \Gamma_j^\perp} \frac{1}{\covol(\Gamma_j)} \hat{g}_j (\omega)
\overline{\hat{g}_j (\omega + \alpha)}, \qquad \text{a.e.\ } \omega \in \ghat,
\]
where $\Gamma_j^\perp$ is the dual lattice of $\Gamma_j$, and
$\covol(\Gamma_j)$ denotes the covolume of $\Gamma_j$.

The sufficient condition presented in
Theorem~\ref{thm:GTIsufficient_CC} states that a system
$\cup_{j \in J} \{g_j (\cdot - \gamma) \}_{\gamma \in \Gamma_j}$ forms
a frame for $L^2(G)$ with bounds $A_1$ and $B_1$ if
\begin{align} \label{eq:CC_intro}
B_1 := \esssup_{\omega \in \ghat} \sum_{\alpha \in \bigcup_{j \in J} \Gamma_j^\perp} |t_{\alpha} (\omega) | < \infty
\end{align}
and
\begin{align} \label{eq:CC_A_intro}
A_1 := \essinf_{\omega \in \ghat} \bigg( t_0 (\omega) - \sum_{\alpha \in \bigcup_{j \in J} \Gamma_j^\perp \setminus \{ 0 \}} |t_{\alpha} (\omega) |  \bigg) > 0.
\end{align}
The subscript $p$ is used in the constants $A_p$ and $B_p$ to indicate
the relation with the $\ell_p$ norm of the sequence
$\{t_{\alpha} (\omega)\}_{\alpha}$ for a.e. $\omega \in \ghat$.
Contrary to previously known sufficient conditions for generalized
shift-invariant systems, the estimates \eqref{eq:CC_intro} and
\eqref{eq:CC_A_intro} take the phase of the generating functions in
consideration and not only their modulus. To be more precise, the
previously known sufficient conditions are not based on the functions
$t_\alpha : \ghat \to \mathbb{C}$, but are based on the non-negative
functions
\[
\ghat \ni \omega \mapsto \sum_{j \in J \; : \; \alpha \in \Gamma_j^\perp} \frac{1}{\covol(\Gamma_j)} \abs{\hat{g}_j (\omega) \hat{g}_j (\omega + \alpha)} \in [0,\infty[.
\]
By considering the phase of the generating functions, the estimates
\eqref{eq:CC_intro} and \eqref{eq:CC_A_intro} allow, for each
$\alpha \in \bigcup_{j \in J} \Gamma_j^\perp$, for phase cancellations
in the sum over $\setpropbig{j \in J}{ \alpha \in \Gamma_j^\perp }$
and therefore often lead to improvements of the known estimates, see,
e.g., Example~\ref{ex:CC_ACC} for an orthonormal basis demonstrating
this.  In fact, the estimates~\eqref{eq:CC_intro} and
\eqref{eq:CC_A_intro} are optimal for tight frames in the sense that
they recover precisely the frame bound.
   
The obtained necessary condition (Theorem~\ref{thm:GTInec_little-l2})
asserts that if
$\cup_{j \in J} \{g_j (\cdot - \gamma) \}_{\gamma \in \Gamma_j}$ is a
Bessel sequence in $L^2 (G)$ with upper frame bound $B$, then
\begin{align}\label{eq:B_1-lower_intro}
 B &\ge B_2:= \esssup_{\omega \in \ghat} \left(\sum_{\alpha \in
  \bigcup_{j \in J} \Gamma_j^\perp}
       \abs{t_{\alpha}(\omega)}^2\right)^{1/2}. 
 \end{align}
 Combining this with the known fact that for a frame
 $\cup_{j \in J} \{g_j (\cdot - \gamma) \}_{\gamma \in \Gamma_j}$ for
 $L^2 (G)$ with lower bound $A >0$ necessarily
 $A_{\infty} := \essinf_{\omega \in \ghat} t_0 (\omega) \geq A$, it
 follows that $A \le A_{\infty} \le B_2 \le B$ is necessary for a
 frame
 $\cup_{j \in J} \{g_j (\cdot - \gamma) \}_{\gamma \in \Gamma_j}$ with
 bounds $A$ and $B$.

 For the applicability of the frame
 expansions~\eqref{eq:frame-expansion}, it is not only essential to
 verify the frame inequalities~\eqref{eq:frame_ineq_intro}, but also
 to provide good estimates of the frame bounds. The obtained necessary
 and sufficient conditions yield together \emph{frame bound estimates}
 for generalized shift-invariant systems.  Indeed, for a Bessel system
 $\cup_{j \in J} \{g_j (\cdot - \gamma) \}_{\gamma \in \Gamma_j}$ with
 optimal upper bound $B > 0$, the bound $B$ can be estimated by the
 snug bounds
\begin{equation}
 B_2 \equiv  \esssup_{\omega \in \widehat{G}} \norm[\ell^2]{\{t_\alpha(\omega)\}_\alpha} \le B \le
\esssup_{\omega \in \widehat{G}} \norm[\ell^1]{\{t_\alpha(\omega)\}_\alpha} \equiv
B_1.
\label{eq:Bessel-snug-estimates}
\end{equation}
If, furthermore, $\cup_{j \in J} \{g_j (\cdot - \gamma)
\}_{\gamma \in \Gamma_j}$ is a frame in $L^2 (G)$ with optimal lower
bound $A > 0$, then 
\begin{equation}
 A_1 \le A \le A_\infty.
\label{eq:lower-snug-estimates}
\end{equation}

The presented results hold not only for discrete frames and systems as
discussed above, but also for their continuous and semi-continuous
counterpart. To summarize, the main contributions of the paper are new
necessary and sufficient conditions for the frame and Bessel property
of generalized trans\-la\-tion-in\-vari\-ant systems that are (i)
derived under minimal assumptions, (ii) optimal for tight frames,
(iii) verifiable and computable, and that provide (iv) snug frame
bound estimates which collapse to equality for tight frames.

\subsection{Related work}
\label{sec:related-work}
For shift-invariant systems, i.e., generalized shift-invariant systems
with a single, fixed translation lattice $\Gamma$, the Bessel and
frame properties can be \emph{characterized} in terms of (bi-infinite)
matrix-valued functions, known as \emph{dual Gramian matrices}, as
introduced by Ron and Shen \cite{MR1350650}, see also Janssen
\cite{MR1601115}.  Consequently, the aforementioned necessary and
sufficient conditions in \eqref{eq:CC_intro}, \eqref{eq:CC_A_intro}
and \eqref{eq:B_1-lower_intro} can be derived\footnote{For example,
  the necessary condition in \eqref{eq:B_1-lower_intro} for
  shift-invariant systems follows from the norm estimate
  $\norm{M}\ge \left(\sum_{j \in \Gamma^\perp}
    \abs{m_{0,j}}^2\right)^{1/2}$
  by noticing that the $0$th column the dual Gramian matrix at
  $\omega\in \ghat$ is
  $\{t_\alpha(\omega)\}_{\alpha \in \Gamma^\perp}$.} from simple norm
estimates of bi-infinite Hermitian matrices
$M=(m_{i,j})_{i,j\in \Gamma^{\perp}}$ on $\ell^2(\Gamma^\perp)$, see
\cite[Section 1.6]{MR1350650}. In particular, the estimates
\eqref{eq:Bessel-snug-estimates} and \eqref{eq:lower-snug-estimates}
are known for separable Gabor systems \cite{MR1350700, MR1460623};
these estimates are the best-known improvement of Daubechies' Gabor
frame bound estimates~\cite{Dau90}.  Furthermore, the dual Gramian
characterization has, in a fiberization formulation~\cite{MR1795633},
been extended to the setting of locally compact abelian groups
\cite{BowRos2014,MR2578463, MR344891}. Hence, for shift-invariant
systems, or more generally, trans\-la\-tion-in\-vari\-ant systems on
such groups, the conditions \eqref{eq:CC_intro}, \eqref{eq:CC_A_intro}
and \eqref{eq:B_1-lower_intro} follow from these characterizations and
should not be considered new.

For function systems that are not shift-invariant, the fiberization
characterization breaks down. In spite of this, Ron and
Shen~\cite{MR2132766} obtained dual Gramian-type\footnote{For
  generalized shift-invariant systems the direct link using \emph{one}
  dual Gramian matrix for each fiber has to be replaced by a less
  direct link of infinite families of finite matrices for each fiber.}
characterizations for special types of generalized shift-invariant
systems in $L^2(\R^d)$. For example, for generalized shift-invariant
systems satisfying the finite intersection (FI) condition (i.e., the
intersection of any finite subfamily of the lattices
$\{\Gamma_j\}_{j \in J}$ is a full-rank lattice), the Bessel property
can be characterized by the norm of the dual Gramian matrices since
the FI condition essentially reduces the analysis to standard dual
Gramian analysis. On the other hand, many generalized shift-invariant
systems violate the FI condition, e.g., systems with both rational and
non-rational lattices. For lower frame bound characterizations by dual
Gramian analysis additional restrictions on the lattices and
generators are needed, most notably the so-called small tail condition. To
handle wavelet systems associated with expansive, but not necessarily
integer, matrix dilations, other assumptions on the family of lattices
and generators are made such as the notions of temperateness and
roundedness in generalized shift-invariant systems,
cf. \cite{MR2132766} for definitions. However, we stress that none of
the used assumptions in \cite{MR2132766} are weak enough to allow for
dual Gramian characterization of wavelet frames associated with
arbitrary real, expansive dilations.

An alternative route for deriving necessary and sufficient conditions
for wavelet frames with integer, expansive dilations goes through
quasi-affine systems
\cite{ron1997affine,ron1997affine2,chui1998affine}. This link is known
to generalize to rational, expansive dilations \cite{MR2746669},
although one has to consider a family of quasi-affine systems to
capture the frame property of the given wavelet system.  Since
quasi-affine systems are shift-invariant, sufficient and necessary
conditions for rational wavelet systems are readily available. We
stress that such estimates differ slightly from the ones presented in
this paper.  The estimates presented in \cite{ron1995gramian} for
wavelet systems with integer, expansive dilations utilize the
quasi-affine route, but they ignore the phase of the wavelet generator
and are therefore not optimal for tight frames.

The approach we follow relies on a connection between the frame
properties of generalized shift-invariant systems and an associated
almost periodic auxiliary function \cite{MR1940326, MR1916862,
  MR1310658, MR1353539}. Our methods are closely related to the work
of Hern\'andez, Labate and Weiss~\cite{MR1916862}, but while
\cite{MR1916862} is concerned with tight frame characterization using
uniqueness of the coefficients of almost periodic Fourier series, we
focus on non-tight frames by bounding the Fourier series.  The
connection to Fourier analysis is valid under the 1-UCP, which is weak
enough to provide sufficient and necessary conditions for wavelet
frames in $L^2(\R^d)$ associated with any real, expansive dilation
matrix and any translation lattice. The $1$-UCP is even weak enough to
handle every choice of real, invertible dilation (not necessarily
expansive) almost surely with respect to the Haar measure on
$\mathrm{GL}_d (\mathbb{R})$, see Section~\ref{sec:wavelet-systems}.

To wrap up the discussion, no necessary or sufficient conditions,
optimal for tight frames, are currently known for wavelet systems
associated with expansive, real dilations. In fact, the lack of
optimal frame bound estimates for such systems lead
Christensen~\cite{MR1824891} to ask whether sufficient conditions as
in \eqref{eq:CC_intro} and \eqref{eq:CC_A_intro} can also be obtained
for wavelet systems with non-integer dilations. The sufficient
conditions obtained in this paper answer this question in the
affirmative.

\subsection{Outline}
\label{sec:outline}

The paper is organized as follows. In
Section~\ref{sec:gener-transl-invar} we introduce generalized
trans\-la\-tion-in\-vari\-ant systems and the $1$-UCP condition in the
setting of locally compact abelian groups. The main results on
generalized trans\-la\-tion-in\-vari\-ant systems are presented in
Section~\ref{sec:suff_nec}. Necessary and sufficient conditions for
generalized trans\-la\-tion-in\-vari\-ant frames are contained in
Section~\ref{sec:necessary-conditions} and \ref{sec:main-result},
respectively.  In Section~\ref{sec:cc-cond-absol} we compare the
obtained frame bound estimates with known
estimates. Section~\ref{sec:examples} is devoted to applications and
examples. Gabor systems and wavelet systems are considered in
Section~\ref{sec:gabor-systems} and Section~\ref{sec:wavelet-systems},
respectively. Finally, we consider composite wavelet and cone-adapted
shearlet systems in Section~\ref{sec:shearlet-systems}, and we derive
new frame characterizations of the continuous $\ell$-th order
$\alpha$-shearlet transform in Section~\ref{sec:continuous-transform}.

\section{Generalized translation-invariant systems}
\label{sec:gener-transl-invar}
Throughout this paper, $G$ will denote a second countable locally
compact abelian group. The character group of $G$ is denoted by
$\widehat{G}$ and forms a second countable locally compact abelian
group itself. Both $G$ and $\widehat{G}$ are $\sigma$-compact and metrizable. 
The group operation in both $G$ and $\widehat{G}$ is
written additively as $+$ and the identity element is denoted by
$0$. The Haar measure on $G$ will be denoted by $\mu_G$. It is assumed
that the Haar measure on $G$ is given and that the Haar measure on
$\widehat{G}$ is the Plancherel measure. The subset
$\Gamma \subseteq G$ will denote a closed, co-compact subgroup of $G$,
i.e., the quotient space $G / \Gamma$ is compact. In this case, the
annihilator $\Gamma^{\perp}$ of $\Gamma$ is the countable, discrete
subgroup
$\Gamma^{\perp} := \{ \omega \in \widehat{G} \; | \; \omega (x) = 0,
\; \forall x \in \Gamma \}$.
It is assumed that the Haar measure on $\Gamma$ is given and that the
Haar measure on $G / \Gamma$ is the unique quotient measure provided
by Weil's integral formula. Using this quotient measure
$\mu_{G/ \Gamma}$ on $G / \Gamma$, the \emph{covolume} or \emph{size}
of the subgroup $\Gamma \subseteq G$ is defined as
$\covol(\Gamma) := \mu_{G / \Gamma} (G / \Gamma)$.

\subsection{Generalized translation-invariant frames}
\label{sec:gener-transl-invar-1}
The function systems defined next form the central object of this paper. Here, the translate of a function $f \in L^2 (G)$ by $y \in G$ is denoted as $T_y f := f(\cdot - y)$.

\begin{definition}
  Let $J$ be a countable index set. For each $j \in J$, let
  $\Gamma_j \subseteq G$ be a closed, co-compact subgroup, and let
  $P_j$ be an arbitrary (countable or uncountable) index set. For a
  given family of functions
  $\cup_{j \in J} \{g_{j, p} \}_{p \in P_j} \subset L^2 (G)$, the
  collection of translates
\[ \bigcup_{j \in J} \{T_{\gamma} g_{j, p}  \}_{\gamma \in \Gamma_j, p \in P_j} \]
is called a \emph{generalized trans\-la\-tion-in\-vari\-ant (GTI) system} in $L^2 (G)$. 
\end{definition}

We stress that the translation subgroups $\Gamma_j \subset G$,
$j \in J,$ are not assumed to be discrete, but merely closed and
co-compact.  In $G = \mathbb{R}^d$ (and $\widehat{G} = \mathbb{R}^d$),
each closed, co-compact subgroup $\Gamma_j$ is of the form
$C (\mathbb{Z}^k \times \mathbb{R}^{d-k})$ for some $0 \leq k \leq d$
and $C \in \mathrm{GL}_d (\mathbb{R})$.  The (discrete) annihilator
$\Gamma_j^\perp$ is then given by
$(C^T)^{-1}(\Z^k \times \{0\}^{d-k})$.

Following~\cite{JakobsenReproducing2014}, it is assumed that the
generating functions of a generalized trans\-la\-tion-in\-vari\-ant
systems satisfy the following three \emph{standing hypotheses}.  For
each $j \in J$:
\begin{enumerate}[(I)]
\item The triple $(P_j, \Sigma_{P_j}, \mu_{P_j})$ forms a
  $\sigma$-finite measure space; \label{item:hyp1}
\item The mapping $p \mapsto g_{j,p}$ from $(P_j, \Sigma_{P_j})$ into
  $(L^2 (G), \mathcal{B}_{L^2 (G)})$ is $\Sigma_{P_j}$-measurable,
  where $\mathcal{B}_{L^2 (G)}$ denotes the Borel $\sigma$-algebra on
  $L^2(G)$; \label{item:hyp2}
\item The mapping $(p, x) \mapsto g_{j,p} (x)$ from
  $(P_j \times G, \Sigma_{P_j} \otimes \mathcal{B}_G)$ into
  $ (\mathbb{C}, \mathcal{B}_{\mathbb{C}})$ is
  $(\Sigma_{P_j} \otimes \mathcal{B}_G)$-measurable, where
  $\mathcal{B}_G$ denotes the Borel $\sigma$-algebra on
  $G$; \label{item:hyp3}
\end{enumerate}

For most applications in this paper, it suffices to take $\{P_j\}_{j \in J}$ 
to be countable index sets equipped with the counting measure, cf. 
Section \ref{sec:examples}. In this case, the three standing hypotheses \eqref{item:hyp1}--\eqref{item:hyp3} are automatically satisfied.

A generalized trans\-la\-tion-in\-vari\-ant system
$\cup_{j \in J} \{T_{\gamma} g_{j,p} \}_{\gamma \in \Gamma_j, p \in
  P_j}$
is called a \emph{generalized trans\-la\-tion-in\-vari\-ant frame} for
$L^2 (G)$, with respect to
$\{L^2 (P_j \times \Gamma_j) \; | \; j \in J \}$, whenever there exist
two constants $A, B > 0$, called the \emph{frame bounds}, such that
\begin{align} \label{eq:GTIframe}
A\| f \|^2 \leq \sum_{j \in J} \int_{P_j} \int_{\Gamma_j} | \langle f, T_{\gamma} g_{j,p} \rangle |^2 \; d\mu_{\Gamma_j} (\gamma) d\mu_{P_j} (p) \leq B \|f\|^2
\end{align}
for all $f \in L^2 (G)$. A generalized trans\-la\-tion-in\-vari\-ant
system
$\cup_{j \in J} \{T_{\gamma} g_{j,p} \}_{\gamma \in \Gamma_j, p \in
  P_j}$
satisfying the upper frame bound is called a \emph{Bessel system} or a
\emph{Bessel family} in $L^2 (G)$. For such a Bessel system, the frame
operator $S:L^2(G)\to L^2(G)$, associated with
$\cup_{j \in J} \{T_{\gamma} g_{j,p} \}_{\gamma \in \Gamma_j, p \in
  P_j}$,
is defined weakly by equating $\innerprod{Sf}{f}$ with the central
term of~\eqref{eq:GTIframe}.

In order to check whether a generalized trans\-la\-tion-in\-vari\-ant
system forms a Bessel system or a frame for $L^2 (G)$, it suffices to
check the frame condition on a dense subspace of $L^2 (G)$. Let
$\mathcal{E}$ denote the set of all closed Borel sets
$E \subseteq \widehat{G}$ satisfying $\mu_{\widehat{G}} (E) = 0$. For
a fixed $E \in \mathcal{E}$, define the dense subspace
$\mathcal{D}_E (G)$ of $L^2 (G)$ as
\[
 \mathcal{D}_E (G) = \left\{ f \in L^2 (G) \;| \; \hat{f} \in L^\infty(\ghat) \text{ and }
    \exists K \subset \ghat\setminus E \text{ compact with } \hat{f}
    \mathds{1}_K = \hat{f} \text{ a.e.}\right\}  ,
\]
where $\hat{f}$ denotes the Fourier transform of $f \in L^2 (G)$ and
$\mathds{1}_K$ the characteristic function on $K$.  A function
$f \in \mathcal{D}_E (G)$ has the property that $\hat{f}$ is zero
almost everywhere on $E+B(0,\delta)$ for some $\delta>0$, where
$B(0,\delta)$ denotes the open ball of radius $\delta$, since
$\supp \hat{f}$ is compact, $E$ is closed, and
$\supp{\hat{f}} \cap E = \emptyset$.

We consider the set $E \in \mathcal{E}$, called the \emph{blind spot},
as fixed, but arbitrary.  The actual choice of $E$ depends on the
considered application. For wavelet systems, the blind spot $E$ is
usually chosen to be the complement $\mathcal{O}^c$ of the dual orbit
$\mathcal{O} \subseteq \widehat{G}$ of the dilation group
\cite{fuehr2010generalized}, whereas for Gabor systems it usually
suffices to take $E = \emptyset$.

\subsection{Unconditional convergence property and Fourier analysis}
\label{sec:uncond-conv-prop}
This section is devoted to the $1$-UCP condition and to generalized
Fourier series of almost periodic functions essential to our study of
generalized translation-invariant frames.  Recall that an almost
periodic function is the uniform limit of trigonometric polynomials.
The space of almost periodic functions is denoted by $\AP(G)$.

Given a generalized trans\-la\-tion-in\-vari\-ant system
$\cup_{j \in J} \{T_{\gamma} g_{j,p} \}_{\gamma \in \Gamma_j, p \in
  P_j}$
and a function $f \in \mathcal{D}_E (G)$, define, for $j \in J$, the
map $w_{f,j} : G \to [0,\infty]$ by
\begin{align} \label{eq:wfj}
 w_{f,j} (x) = \int_{P_j} \int_{\Gamma_j} | \langle T_x f, T_{\gamma} g_{j,p} \rangle |^2 \;  d\mu_{\Gamma_j} (\gamma) d\mu_{P_j}(p).
\end{align}
By the standing hypotheses \eqref{item:hyp1}--\eqref{item:hyp3}, the
integrals in \eqref{eq:wfj} are well-defined. Throughout this section, we will further assume that 
\begin{equation}
\int_{P_j} | \hat{g}_{j,p} (\cdot) |^2 \; d\mu_{P_j} (p) \in
L^1_{\loc} (\widehat{G}).  \label{eq:int-over-Pj-loc-int}
\end{equation}
It can be shown, using arguments
from~\cite{MR2283810,JakobsenReproducing2014}, that the integrability
condition~\eqref{eq:int-over-Pj-loc-int} ensures that each $w_{f,j}$
is a trigonometric polynomial, in particular, that $w_{f,j}$ are
$\Gamma_j$-periodic, continuous, and bounded.

Next, we define $w_f : G \to [0,\infty]$ as the following sum of
non-negative, trigonometric polynomials
\[
w_f (x) = \sum_{j \in J} w_{f,j} (x) = \sum_{j \in J}  \int_{P_j} \int_{\Gamma_j} | \langle T_x f, T_{\gamma} g_{j,p} \rangle |^2 \; d\mu_{\Gamma_j} (\gamma) d\mu_{P_j}(p).
\]
The function $w_f : G \to [0,\infty]$ is well-defined, but it might
attain the value of positive infinity without any further assumptions
on the generalized trans\-la\-tion-in\-vari\-ant system.

Under a suitable regularity condition on the system
$\cup_{j \in J} \{T_{\gamma} g_{j,p} \}_{\gamma \in \Gamma_j, p \in
  P_j}$,
the function $w_f$ becomes almost periodic.  The connection between
the almost periodicity of $w_f$ and generalized
trans\-la\-tion-in\-vari\-ant frames was first used by
Laugesen~\cite{MR1866351, MR1940326} for wavelet systems and extended
to arbitrary generalized shift-invariant (GSI) systems in $L^2(\R^d)$
by Hern\'andez, Labate and Weiss~\cite{MR1916862}.

The following regularity condition, sufficiently weak for our
purposes, was introduced in \cite{bandwidth}.
\begin{definition} \label{def:ucp}
Let $\cup_{j \in J} \{T_{\gamma} g_{j,p} \}_{\gamma \in \Gamma_j, p \in P_j}$ be a generalized trans\-la\-tion-in\-vari\-ant system.
\begin{enumerate}[(i)]
\item The system
  $\cup_{j \in J} \{T_{\gamma} g_{j,p} \}_{\gamma \in \Gamma_j, p \in
    P_j}$
  is said to satisfy the \emph{1-unconditional convergence property}
  (1-UCP), with respect to $E \in \mathcal{E}$, whenever, for all
  $f \in \mathcal{D}_E (G)$, the function $w_f : G \to \mathbb{C}$ is
  almost periodic and the series
\begin{align} \label{eq:UCP}
w_f = \sum_{j \in J} w_{f,j}
\end{align}
converges unconditionally with respect to the mean
$M : \AP (G) \to [0,\infty)$,
\[
M (|f|) = \lim_{n \to \infty} \frac{1}{\mu_G (H_n)} \int_{H_n} |f(x)| \; d\mu_G (x),
\]
where $(H_n)_{n \in \mathbb{N}}$ is any increasing sequence of open, relatively compact subsets $H_n \subseteq G$ with $G = \bigcup_{n \in \mathbb{N}} H_n$ and such that
\[
\lim_{n \to \infty} \frac{\mu_G ((x + H_n) \cap (G \setminus H_n))}{\mu_G (H_n)} = 0
\]
for all $x \in G$. 
\item In case \eqref{eq:UCP} holds
   with uniform convergence, the system $\cup_{j \in J} \{T_{\gamma} g_{j,p} \}_{\gamma \in \Gamma_j, p \in P_j}$ is said to satisfy the \emph{$\infty$-UCP} with respect to $E \in \mathcal{E}$. 
   \end{enumerate}
\end{definition}

The $1$-UCP is the weakest known assumption under which Fourier
analysis of $w_f$ can be exploited to study the frame properties of
the associated system. Note that almost periodicity of $w_f$ is
assumed in $1$-UCP, while, in $\infty$-UCP, it follows from the
uniform convergence of~\eqref{eq:UCP}.

Both $1$-UCP and $\infty$-UCP are automatically satisfied whenever the
generalized trans\-la\-tion-in\-vari\-ant system
$\cup_{j \in J} \{T_{\gamma} g_{j,p} \}_{\gamma \in \Gamma_j, p \in
  P_j}$
satisfies the local integrability condition (LIC) or the weaker
$\alpha$-local integrability condition ($\alpha$-LIC) introduced
in~\cite{MR1916862} and \cite{JakobsenReproducing2014}, respectively.
A system
$\cup_{j \in J} \{T_{\gamma} g_{j,p} \}_{\gamma \in \Gamma_j, p \in
  P_j}$
is said to satisfy the \emph{$\alpha$-LIC}, with respect to $E$, if
\begin{equation*}
\sum_{j \in J} \frac{1}{\covol (\Gamma_j)} \int_{P_j} \sum_{\alpha
   \in \Gamma_j^{\perp}} \int_{\widehat{G}} \abs{ \hat{f} (\omega)
 \hat{f}(\omega + \alpha) \hat{g}_{j, p} (\omega + \alpha) \hat{g}_{j,
   p} (\omega)} \; d\mu_{\widehat{G}} (\omega) d\mu_{P_j} (p) <
 \infty 
\end{equation*}
for all $f \in \mathcal{D}_E (G)$.

The generalized Fourier series of $w_f$ is stated
in the next result, adapted from \cite{bandwidth}.

\begin{proposition} \label{prop:GTIgeneralisedfourierseries}
Suppose $\cup_{j \in J} \{T_{\gamma} g_{j,p} \}_{\gamma \in \Gamma_j, p \in P_j}$ satisfies the $1$-UCP with respect to $E \in \mathcal{E}$. Moreover, suppose that
\begin{align} \label{eq:Calderon_LI}
\sum_{j \in J} \frac{1}{\covol(\Gamma_j)} \int_{P_j} |\hat{g}_{j,p} (\cdot) |^2 \; d\mu_{P_j} (p) \in L^1_{\loc} (\widehat{G} \setminus E). 
\end{align}
Then, for all $f \in \mathcal{D}_E (G)$, the Fourier-Bohr transform of $w_f : G \to \mathbb{C}$ at $\alpha \in \widehat{G}$ is 
\[
\widehat{w_f} (\alpha) := M( w_f \cdot \overline{\alpha}) = \sum_{j \in J} \widehat{w_{f,j}} (\alpha)
\]
with absolute convergence. The generalized Fourier series of $w_f : G \to \mathbb{C}$ is given by
\begin{align} \label{eq:GTIgeneralisedfourierseries}
w_f  = \sum_{\alpha \in \bigcup_{j \in J} \Gamma_j^{\perp}} c_{\alpha} \alpha ,
\end{align}
where the Fourier coefficients are given by 
\begin{align} \label{eq:Fourier_coefficients} c_{\alpha} =
  \int_{\widehat{G}} \hat{f} (\omega) \overline{\hat{f}(\omega +
    \alpha)} \sum_{j \in J \; : \; \alpha \in \Gamma_j^{\perp}}
  \frac{1}{\covol(\Gamma_j)} \int_{P_j} \overline{\hat{g}_{j,p}
    (\omega)} \hat{g}_{j,p} (\omega + \alpha) \; d\mu_{P_j} (p)
  d\mu_{\widehat{G}} (\omega).
\end{align}
  Furthermore, if the
$\infty$-UCP holds, then $w_f$ agrees pointwise with its generalized Fourier
series \eqref{eq:GTIgeneralisedfourierseries}.  
\begin{proof}
  The result, except for the specific form of the Fourier
  coefficients, can be found in \cite{bandwidth}.  Indeed, by
  \cite[Proposition 3.10]{bandwidth}, the generalized Fourier series
  of $w_f : G \to \mathbb{C}$ is
  $w_f = \sum_{\alpha \in \bigcup_{j \in J} \Gamma_j^{\perp}}
  c_{\alpha} \alpha$ with coefficients
\[
c_{\alpha} = \sum_{j \in J \; : \; \alpha \in \Gamma_j^{\perp}}
\int_{\widehat{G}} \hat{f} (\omega) \overline{\hat{f}(\omega +
  \alpha)} \frac{1}{\covol(\Gamma_j)} \int_{P_j}
\overline{\hat{g}_{j,p} (\omega)} \hat{g}_{j,p} (\omega + \alpha) \;
d\mu_{P_j} (p) d\mu_{\widehat{G}} (\omega).
\]
For fixed $\alpha \in \bigcup_{j \in J} \Gamma_j^{\perp}$, the
assumption \eqref{eq:Calderon_LI}, together with Cauchy-Schwarz'
inequality, yields that
\[
\sum_{j \in J \; : \; \alpha \in \Gamma_j^{\perp}}
\frac{1}{\covol(\Gamma_j)} \int_{P_j} \abs{ \hat{g}_{j,p} (\cdot)
  \hat{g}_{j,p} (\cdot + \alpha) } \; d\mu_{P_j} (p) \in L^1_{\loc}
(\widehat{G} \setminus E),
\]
which in turn implies that
\[
\sum_{j \in J \; : \; \alpha \in \Gamma_j^{\perp}}
\int_{\widehat{G}} \biggl| \hat{f} (\omega) \hat{f}(\omega + \alpha)
\frac{1}{\covol(\Gamma_j)} \int_{P_j} \hat{g}_{j,p} (\omega)
\hat{g}_{j,p} (\omega + \alpha) \; d\mu_{P_j} (p) \; \biggr| \;
d\mu_{\widehat{G}} (\omega) < \infty.
\]
Thus, by Fubini-Tonelli's theorem, the series and integral defining
$c_{\alpha}$ can be interchanged to obtain the desired form
\eqref{eq:Fourier_coefficients}.
\end{proof} 
\end{proposition}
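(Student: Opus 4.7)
The plan is to combine the general almost-periodic expansion of $w_f$ from \cite{bandwidth} with a Fubini--Tonelli argument that rewrites each Fourier--Bohr coefficient in the closed form \eqref{eq:Fourier_coefficients}.

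First, I would invoke Proposition~3.10 of \cite{bandwidth}: under the $1$-UCP, $w_f$ is almost periodic and coincides with its generalized Fourier series in $M(|\cdot|)$-norm (and uniformly in the $\infty$-UCP case), the frequencies lie in $\bigcup_{j\in J}\Gamma_j^{\perp}$, and the Bohr coefficients decompose as
\[
\widehat{w_f}(\alpha) \;=\; \sum_{j \in \kappa(\alpha)} \widehat{w_{f,j}}(\alpha),
\]
with absolute convergence, each term being given by the single-lattice Plancherel/Parseval identity as
\[
\widehat{w_{f,j}}(\alpha) \;=\; \frac{1}{\covol(\Gamma_j)} \int_{P_j} \int_{\widehat{G}} \hat{f}(\omega)\overline{\hat{f}(\omega+\alpha)}\, \hat{g}_{j,p}(\omega) \overline{\hat{g}_{j,p}(\omega+\alpha)}\,d\mu_{\widehat{G}}(\omega)\,d\mu_{P_j}(p).
\]
This already settles the almost-periodic structure and the identification of the spectrum; only the passage to the explicit expression \eqref{eq:Fourier_coefficients} remains.

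Second, to obtain \eqref{eq:Fourier_coefficients} I would interchange the sum over $j\in\kappa(\alpha)$ with the $\omega$-integration. Fix $\alpha$ and $f\in\mathcal{D}_E(G)$; the factor $\hat{f}(\omega)\overline{\hat{f}(\omega+\alpha)}$ is bounded by $\|\hat{f}\|_\infty^2$ and supported on a compact set $K\subseteq \widehat{G}\setminus E$. Writing $A_j(\omega) := \covol(\Gamma_j)^{-1}\int_{P_j}|\hat{g}_{j,p}(\omega)|^2\,d\mu_{P_j}(p)$, Cauchy--Schwarz on $P_j$ followed by $ab\le\tfrac12(a^2+b^2)$ gives pointwise
\[
\sum_{j\in\kappa(\alpha)} \frac{1}{\covol(\Gamma_j)} \int_{P_j} \bigl|\hat{g}_{j,p}(\omega)\,\hat{g}_{j,p}(\omega+\alpha)\bigr|\,d\mu_{P_j}(p) \;\le\; \tfrac12\bigl(\Phi(\omega)+\Phi(\omega+\alpha)\bigr),
\]
where $\Phi := \sum_{j\in J} A_j$ lies in $L^1_{\loc}(\widehat{G}\setminus E)$ by hypothesis \eqref{eq:Calderon_LI}. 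Since $K$ and $K-\alpha$ are both compact subsets of $\widehat{G}\setminus E$, the resulting triple integral is finite, and Fubini--Tonelli legitimates pulling the $j$-sum inside the $\omega$-integral, yielding \eqref{eq:Fourier_coefficients}.

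The argument is essentially a direct translation of the cited result once an integrable envelope is in hand; the only point requiring care is the construction of this envelope, which is precisely what the combination of Cauchy--Schwarz in $p$ with assumption \eqref{eq:Calderon_LI} supplies. I do not anticipate any genuine obstacle beyond this routine bookkeeping.
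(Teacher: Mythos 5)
Your proposal is correct and follows essentially the same route as the paper: both invoke \cite[Proposition 3.10]{bandwidth} for the almost-periodic expansion and the form of the Bohr coefficients, and both justify interchanging the $j$-sum with the $\omega$-integral via an integrable envelope obtained from \eqref{eq:Calderon_LI} and Cauchy--Schwarz, followed by Fubini--Tonelli; you merely spell out the Young-inequality step that the paper leaves implicit. (One trivial slip: the translate of $K$ relevant to the $\Phi(\omega+\alpha)$ term is $K+\alpha$, and what matters is not that it avoids $E$ but that the change of variables $\omega\mapsto\omega+\alpha$ turns that integral into an integral of $\Phi$ over a compact subset of $\supp\hat f\subseteq\widehat G\setminus E$.)
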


To ease notation, we define the following functions appearing
implicitly in the Fourier coefficients in Proposition
\ref{prop:GTIgeneralisedfourierseries}.
\begin{definition}
  Let
  $\cup_{j \in J} \{T_{\gamma} g_{j,p} \}_{\gamma \in \Gamma_j, p \in
    P_j}$
  be a generalized trans\-la\-tion-in\-vari\-ant system satisfying
\begin{align} \label{eq:Calderon_bounded}
\sum_{j \in J} \frac{1}{\covol(\Gamma_j)} \int_{P_j} \abs{\hat{g}_{j,p} (\omega) }^2 \; d\mu_{P_j} (p) < \infty
\end{align}
for $\mu_{\widehat{G}}$-a.e. $\omega \in \widehat{G}$. For the system $\cup_{j \in J} \{T_{\gamma} g_{j,p} \}_{\gamma \in \Gamma_j, p \in P_j}$, the associated \emph{auto-correlation functions} $ t_{\alpha}$, $\alpha \in \cup_{j \in J} \Gamma_j^{\perp}$ are $\mu_{\widehat{G}}$-a.e. defined by
\[
t_{\alpha} : \widehat{G} \to \mathbb{C}, \; \omega \mapsto \sum_{ j \in J \; : \; \alpha \in \Gamma_j^{\perp} } \frac{1}{\covol(\Gamma_j)} \int_{P_j}  \hat{g}_{j,p} (\omega) \overline{\hat{g}_{j,p} (\omega + \alpha)} \;  d\mu_{P_j} (p).
\]
\end{definition}

Phrased in terms of the auto-correlation functions, the
assumptions~\eqref{eq:Calderon_LI} and \eqref{eq:Calderon_bounded}
require $t_0 : \widehat{G} \to \mathbb{C}$ to be locally integrable
and uniformly bounded, respectively. Any generalized
trans\-la\-tion-in\-vari\-ant system forming a Bessel family with
upper bound $B > 0$ satisfies
\begin{equation} 
  \label{eq:t0-bounded-calderon}
  t_0(\omega) \le B
\end{equation}
for $\mu_{\widehat{G}}$-a.e. $\omega \in \widehat{G}$, as shown in
\cite{MR2283810,MR1916862,JakobsenReproducing2014}. The assumptions of
any of our results in Section~\ref{sec:suff_nec} imply
\eqref{eq:Calderon_bounded}, hence the auto-correlation functions
$t_\alpha$ will always be well-defined. In particular,
assumption~\eqref{eq:int-over-Pj-loc-int} is satisfied.

\section{Sufficient and necessary conditions for the frame
  property} \label{sec:suff_nec} This section contains the main
results of the paper. Section ~\ref{sec:necessary-conditions} provides
a necessary condition for the Bessel and frame property of a
generalized trans\-la\-tion-in\-vari\-ant system. Sufficient
conditions for the Bessel and frame properties of generalized
trans\-la\-tion-in\-vari\-ant systems are presented in
Section~\ref{sec:main-result}.  The obtained sufficient conditions are
compared to known frame bound estimates in
Section~\ref{sec:cc-cond-absol}.

\subsection{Necessary conditions}
\label{sec:necessary-conditions}

Inequality~\eqref{eq:t0-bounded-calderon} is a necessary condition for
the Bessel property of a generalized trans\-la\-tion-in\-vari\-ant
system. Under a weak regularity assumption, Theorem
\ref{thm:GTInec_little-l2} below presents a much stronger necessary
condition for a generalized trans\-la\-tion-in\-vari\-ant system to
form a Bessel family.

The proof of Theorem \ref{thm:GTInec_little-l2} makes use of a
differentiation process for integrals on locally compact groups as in
\cite[Section 2]{edwards1965pointwise}. In order to apply this, the
following notion is useful, cf. \cite[Definition 2.1]{edwards1965pointwise}. 

\begin{definition}
  Let $G$ be a locally compact group with Haar measure $\mu_{G}$. A
  decreasing sequence $(U_k )_{k \in \mathbb{N}}$ of finite measure
  Borel sets is called a \emph{D'-sequence} in $G$ if every
  neighborhood of $0$ contains some $U_k$, and if there exists a
  constant $C > 0$ such that
\[
0 < \mu_{G} (U_k - U_k) \leq C\mu_G (U_k)
\]
for all $k \in \mathbb{N}$, where $U_k - U_k := \setprop{u-v}{u,v\in U_k}$.
\end{definition}

Given a $D'$-sequence $(U_k)_{k \in \mathbb{N}}$ for $G$ and an $f \in
L^1_{\mathrm{loc}}(G)$, a point $x_0 \in G$ satisfying
\begin{equation}
\lim_{k \to \infty}  \frac{1}{\mu_{G} (U_k)} \int_{x_0
  + U_k} f(x) \; d\mu_{G} (x)  =  f(x_0)
\label{eq:leb-diff-thm}
\end{equation}
is called a \emph{Lebesgue point} of $f$. Lebesgue's differentiation
theorem \cite[Theorem 44.18]{MR0262773} asserts that the set of
Lebesgue points of any $f \in L^1_{\mathrm{loc}}(G)$ has full measure,
or equivalently, that \eqref{eq:leb-diff-thm} holds for a.e.
$x_0 \in G$.

In \cite{BowRos2014}, it is shown that any compactly generated abelian
Lie group $G$ admits a $D'$-sequence. As a consequence, any locally
compact abelian group $G$ of the form
$G = \mathbb{R}^d \times \mathbb{T}^m \times \mathbb{Z}^n \times F$,
where $d,m,n \in \mathbb{N}$ and $F$ is finite, possesses a
$D'$-sequence. On the other hand, Bownik and Ross~\cite{BowRos2014}
also show that some infinite dimensional locally compact abelian
groups, e.g., the tubby torus $\T^{\aleph_0}$, do not admit a
$D'$-sequence.

\begin{theorem} \label{thm:GTInec_little-l2} Let $G$ be a locally
  compact abelian group such that $\widehat{G}$ admits a
  $D'$-sequence.  Let
  $\cup_{j \in J} \{T_{\gamma} g_{j,p} \}_{\gamma \in \Gamma_j, p \in
    P_j}$
  be a generalized trans\-la\-tion-in\-vari\-ant system satisfying the
  $1$-UCP such that the Fourier series of $w_f$ converges
  unconditionally pointwise to $w_f(x_0)$ for some $x_0\in G$. Suppose
  $\cup_{j \in J} \{ T_{\gamma} g_{j, p} \}_{\gamma \in \Gamma_j, p
    \in P_j}$
  forms a Bessel system in $L^2 (G)$ with Bessel bound $B$. Then
\begin{align}\label{eq:B_1-lower}
 B &\ge B_2:= \esssup_{\omega \in \widehat{G}} \left(\sum_{\alpha \in
  \bigcup_{j \in J} \Gamma_j^{\perp}}
       \abs{t_{\alpha}(\omega)}^2\right)^{1/2}.
 \end{align}
Moreover, if $\cup_{j \in J} \{ T_{\gamma} g_{j, p} \}_{\gamma \in
  \Gamma_j, p \in P_j}$, in addition, forms a frame with lower frame bound
$A > 0$, then $A_{\infty}:=\essinf_{\omega \in \widehat{G}} t_0(\omega) \ge A$.
\end{theorem}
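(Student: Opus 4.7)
I plan to handle the two inequalities separately, using the almost periodic Fourier-Bohr series of $w_f$ from Proposition~\ref{prop:GTIgeneralisedfourierseries} in different ways.

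\emph{The lower bound $A_\infty\ge A$} requires only the $1$-UCP and a mean-value argument. Since $\|T_xf\|=\|f\|$ for all $x\in G$, the lower frame bound yields $w_f(x)\ge A\|f\|^2$ for every $x\in G$ and every $f\in\mathcal{D}_E(G)$. Taking the mean $M(\cdot)$ of both sides and using that, for a Bohr almost periodic function, the mean equals the zeroth Fourier-Bohr coefficient,
\[
M(w_f) = c_0 = \int_{\widehat{G}} |\hat f(\omega)|^2 t_0(\omega)\,d\mu_{\widehat{G}}(\omega) \,\ge\, A\|f\|^2 = A\int_{\widehat{G}}|\hat f|^2\,d\mu_{\widehat{G}},
\]
so $\int_{\widehat{G}}|\hat f|^2(t_0-A)\,d\mu_{\widehat{G}}\ge 0$ for every $f\in\mathcal{D}_E(G)$; by density this forces $t_0\ge A$ a.e.\ on $\widehat{G}$, which is $A_\infty\ge A$.

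\emph{The Bessel estimate $B\ge B_2$} exploits the pointwise convergence at $x_0$ and the $D'$-sequence. Polarizing Proposition~\ref{prop:GTIgeneralisedfourierseries} via the pointwise-convergence hypothesis applied to $w_{f\pm h}$ and $w_{f\pm ih}$, I obtain, for any $f,h\in\mathcal{D}_E(G)$,
\[
w_{f,h}(x_0) = \sum_\alpha c_\alpha(f,h)\,\alpha(x_0), \qquad c_\alpha(f,h) := \int_{\widehat{G}}\hat f(\omega)\overline{\hat h(\omega+\alpha)}t_\alpha(\omega)\,d\mu_{\widehat{G}}(\omega),
\]
while Bessel applied to $T_{x_0}f$ and $T_{x_0}h$ gives $|w_{f,h}(x_0)| = |\langle ST_{x_0}f,T_{x_0}h\rangle| \le B\|f\|\|h\|$. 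I then fix a finite $F\subset \bigcup_{j\in J}\Gamma_j^\perp$ containing $0$ and a simultaneous Lebesgue point $\omega_0\in\widehat{G}\setminus E$ of every $t_\alpha$; such $\omega_0$'s form a full-measure set since $\bigcup_{j}\Gamma_j^\perp$ is countable, each $t_\alpha$ is essentially bounded by $B$ via $|t_\alpha|^2\le t_0(\cdot)t_0(\cdot+\alpha)\le B^2$, and $\widehat{G}$ admits the $D'$-sequence $(U_k)$ for Lebesgue differentiation. Taking
\[
\hat f_k := |U_k|^{-1/2}\chi_{\omega_0+U_k}, \qquad \hat h_k := |U_k|^{-1/2}\sum_{\beta\in F} d_\beta\,\chi_{\omega_0+\beta+U_k},\qquad d_\beta := t_\beta(\omega_0)\beta(x_0),
\]
I have $\|f_k\|=1$ and $\|h_k\|^2 = \sum_{\beta\in F}|t_\beta(\omega_0)|^2$ for $k$ large (disjoint bumps). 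Lebesgue differentiation gives $c_\alpha(f_k,h_k)\to \overline{d_\alpha}t_\alpha(\omega_0)$ for each $\alpha\in F$ and $\to 0$ for each fixed $\alpha\notin F$, so the expected limit of $w_{f_k,h_k}(x_0)$ is $\sum_{\beta\in F}\overline{d_\beta}t_\beta(\omega_0)\beta(x_0) = \sum_{\beta\in F}|t_\beta(\omega_0)|^2$. The Bessel bound then yields $\bigl(\sum_{\beta\in F}|t_\beta(\omega_0)|^2\bigr)^{1/2}\le B$; exhausting $F\uparrow\bigcup_{j}\Gamma_j^\perp$ and taking the essential supremum in $\omega_0$ gives $B_2\le B$.

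\emph{The main technical obstacle} is to justify the interchange of the limit $k\to\infty$ with the infinite Fourier sum for $w_{f_k,h_k}(x_0)$: although each fixed $\alpha\notin F$ contributes $o(1)$, the countable set $\bigcup_{j}\Gamma_j^\perp$ may accumulate at points of $F$, so many $\alpha\notin F$ can produce simultaneously nonzero $c_\alpha(f_k,h_k)$. The tail must be controlled uniformly in $k$, combining the uniform bound $|t_\alpha|\le B$ with the shrinking of $U_k-U_k$, for instance through a Fubini/counting estimate on $\#\{\alpha\in\bigcup_{j}\Gamma_j^\perp: \beta-\alpha\in U_k-U_k\}$ and the observation that $|U_k\cap(U_k+\beta-\alpha)|/|U_k|\to 0$ for $\alpha\neq\beta$.
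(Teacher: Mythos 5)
Your strategy coincides with the paper's: the mean-value argument for $A_\infty\ge A$ (which the paper simply outsources to \cite[Theorem 3.13]{bandwidth}), and, for $B\ge B_2$, the polarized Bessel bound $\abs{\langle ST_{x_0}f,T_{x_0}h\rangle}\le B\norm{f}\norm{h}$ tested against bump functions built from a $D'$-sequence around a common Lebesgue point of the $\abs{t_\alpha}^2$, followed by Lebesgue differentiation. The lower-bound half and the construction of $f_k$, $h_k$ are fine (your constant weights $d_\beta=t_\beta(\omega_0)\beta(x_0)$ are a legitimate variant of the paper's choice, and keeping $x_0$ general instead of translating it to $0$ is harmless).

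However, the step you label ``the main technical obstacle'' is the actual crux of the theorem, you leave it unresolved, and the remedy you sketch cannot work. You need $\sum_{\alpha\notin F}c_\alpha(f_k,h_k)$ to be small uniformly in large $k$, and you propose to extract this from $\abs{t_\alpha}\le B$ together with a count of $\setpropsmall{\alpha}{\beta-\alpha\in U_k-U_k}$ and the decay of $\mu_{\ghat}(U_k\cap(U_k+\beta-\alpha))/\mu_{\ghat}(U_k)$. But $\bigcup_{j\in J}\Gamma_j^\perp$ need not be uniformly discrete --- for wavelet systems with irrational dilations it is dense in $\ghat$ --- so that count is infinite for every $k$, and any tail estimate that uses only the uniform bound $\abs{t_\alpha}\le B$ is in effect attempting an $\ell^1$-type control of $\{t_\alpha(\omega_0)\}_\alpha$, which fails even for Bessel systems satisfying the LIC (the Casazza--Christensen--Janssen example recalled in Section~\ref{sec:cc-cond-absol} has $\sum_\alpha\abs{t_\alpha(\omega)}=\infty$ a.e.). The paper closes this gap by two devices you should adopt. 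First, it takes $\hat f_2$ proportional to the function $h$ defined by $h(\omega+\alpha)=t_\alpha(\omega)$ on the translated bumps, normalized by $\norm[2]{h}$; with this choice the finite partial sum $\sum_{\alpha\in\Lambda_{m,n}}c_\alpha$ equals \emph{exactly} $\bigl(\sum_{\alpha\in\Lambda_{m,n}}\mu_{\ghat}(U_k)^{-1}\int_{\omega_0+U_k}\abs{t_\alpha}^2\,d\mu_{\ghat}\bigr)^{1/2}$ for every $k$, so no interchange of the $k$-limit with an infinite sum is needed to evaluate it. Second, the discrepancy between this finite partial sum and the full sum $\sum_{\alpha}c_\alpha$ (whose modulus is at most $B$) is controlled by the hypothesis that the Fourier series of $w_f$ converges unconditionally pointwise at $x_0$ --- the very assumption you invoke only to write the series down and never use to bound its tail. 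Until the tail is handled by this route (or an equivalent one), the inequality $\bigl(\sum_{\beta\in F}\abs{t_\beta(\omega_0)}^2\bigr)^{1/2}\le B$ is not established.
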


\begin{proof}
The ``moreover''-part is a consequence of~\cite[Theorem 3.13]{bandwidth}. 
The remainder of the proof is divided into three steps:

\textbf{Step 1:} \emph{Rewriting the frame operator.} Let
$S : L^2 (G) \to L^2 (G)$ denote the frame operator associated with
the Bessel family
$\cup_{j \in J} \{T_{\gamma} g_{j,p} \}_{\gamma \in \Gamma_j, p \in
  P_j}$.
The map $ (f_1, f_2) \mapsto \langle S f_1, f_2 \rangle$ is a
well-defined, bounded sesquilinear form on $L^2 (G) \times L^2 (G)$
with
 \begin{equation}
 |\langle Sf_1, f_2 \rangle | \leq B \| f_1 \|_2 \| f_2 \|_2
  \label{eq:frame_operator_bessel}
 \end{equation}
 for all  $f_1, f_2 \in L^2 (G)$. The
frame operator is related to the function $w_f : G \to \mathbb{C}$ introduced in
\eqref{eq:UCP} by $w_f(x)=\innerprod{ST_xf}{T_xf}$, in particular,
$w_f(0)=\innerprod{Sf}{f}$ for $f \in \mathcal{D}_{E} (G)$.  By
translation invariance of $\mathcal{D}_E (G)$, we can assume $x_0=0$, that
is, pointwise convergence of the Fourier series of $w_f$ at the origin to $w_f(0)$.
By Proposition~\ref{prop:GTIgeneralisedfourierseries}, it then follows that 
\begin{align} \label{eq:Walnut}
\innerprod{Sf}{f} = \sum_{\alpha \in \bigcup_{j \in J} \Gamma_j^{\perp}}
  \int_{\widehat{G}} \hat{f} (\omega) \overline{\hat{f} (\omega +
    \alpha)} \overline{t_{\alpha} (\omega)} \; d\mu_{\widehat{G}} (\omega)
\end{align}
for all $f \in \mathcal{D}_{E} (G)$. Here, each auto-correlation function
 $t_{\alpha} : \widehat{G} \to \mathbb{C}$ is well-defined with
 $\|t_{\alpha}\|_{\infty} \leq B$ by \eqref{eq:t0-bounded-calderon}.
The identity \eqref{eq:Walnut}, together with an application of the polarization identity for sesquilinear forms
  and the bound \eqref{eq:frame_operator_bessel}, therefore gives
  \begin{align}
 \label{eq:K_convergence}
    \absBig{ \sum_{\alpha \in
    \bigcup_{j \in J} \Gamma_j^{\perp}} \int_{\widehat{G}}
    \hat{f}_1 (\omega) \overline{\hat{f}_2 (\omega + \alpha)}
    \overline{t_{\alpha} (\omega)} \; d\mu_{\widehat{G}} (\omega)
    \,} \leq B \|f_1 \|_2 \|f_2 \|_2
\end{align}
for all $f_1, f_2 \in \mathcal{D}_{E} (G)$. For clarity, 
we define $c_{\alpha}:=\int_{\widehat{G}} \hat{f}_1 (\omega)
\overline{\hat{f}_2 (\omega + \alpha)} \overline{t_{\alpha} (\omega)}
d\mu_{\widehat{G}} (\omega)$ for $\alpha \in \Lambda :=  \bigcup_{j
  \in J} \Gamma_j^{\perp}$. Then \eqref{eq:K_convergence} simply reads
  $| \sum_{\alpha} c_{\alpha} | \leq B \| f_1 \|_2 \| f_2 \|_2$. 

  \textbf{Step 2:} \emph{Construction of test functions $f_1, f_2$.}
  First, we assume that $E=\emptyset$ in the $1$-UCP assumption. Let
  $\omega_0 \in \widehat{G}$ be a common Lebesgue point of
  $\abs{t_\alpha}^2\in L^\infty(\ghat)\subset
  L^1_{\mathrm{loc}}(\ghat)$
  for all $\alpha \in \Lambda$. The dual group $\widehat{G}$ is second
  countable, whence metrizable.  The metric $d_{\widehat{G}}$ inducing
  the given topology on $\widehat{G}$ can be chosen to be
  translation-invariant. Let $\sigma: \N \to J$ be a bijection and
  define
  $\Lambda_{m,n}:=\bigcup_{i = 1}^n \Gamma_{\sigma (i)}^{\perp} \cap
  B(0,m)$,
  where $B(0,m)$ denotes the open ball, relative to $d_{\widehat{G}}$,
  with radius $m > 0$ and center $0 \in \ghat$. Then, given any
  $\alpha \in \Lambda$, there exists $m,n \in \N$ such that
  $\alpha \in \Lambda_{m,n}$.

  For fixed $m,n \in \N$, set
  $\delta_{m,n}:=\min\setpropsmall{d_{\widehat{G}} (\alpha,\alpha')}{
    \alpha, \alpha', \in \Lambda_{m,n} \text{ with } \alpha \neq
    \alpha'}$.
  Note that $\delta_{m,n}>0$ since $\Lambda_{m,n}$ is a finite set.
  Let $(U_k)_{k \in \mathbb{N}}$ be a $D'$-sequence in $\widehat{G}$.
  The sets $U_k$ lie eventually inside an arbitrary neighborhood of
  $0 \in \ghat$. Thus, by local compactness of $\ghat$, we can assume
  without loss of generality that $U_1$ is relatively
  compact. Moreover, we let $K \in \mathbb{N}$ be so that
  $U_k \subset B(0,\delta_{m,n}/2)$ for all $k \ge K$. Then, for all
  $k \ge K$,
\begin{equation}
  \mu_{\ghat}((\alpha + U_k) \cap (\alpha' + U_k)) = 0 \quad \text{
    for all $\alpha, \alpha' \in \Lambda_{m,n}$ with $\alpha \neq
    \alpha'$}.
\label{eq:Dprime-seq-alpha}
\end{equation}
Define $f_1 \in \mathcal{D}_{\emptyset} (G)$ by
$\hat{f}_1 := \mu_{\widehat{G}} (U_k)^{-1/2}
\mathds{1}_{\omega_0+U_k}$.
For $k \ge K$, define $h : \widehat{G} \to \mathbb{C}$ on
$\omega_0+U_k+\Lambda_{m,n}$ by
\[
h (\omega + \alpha) =
\overline{t_{\alpha} (\omega)} \qquad \text{for a.e. } \omega \in \omega_0+U_k
\]
for each $\alpha \in \Lambda_{m,n}$ and by $h(\omega)=0$ for
$\omega \in \ghat \setminus (\omega_0+U_k+\Lambda_{m,n})$.  The
property \eqref{eq:Dprime-seq-alpha} of $U_k$ guarantees that $h$ is
well-defined. Let $\hat{f}_2 := \| h \|_{2}^{-1} h$. Then
$f_2 \in \mathcal{D}_{\emptyset} (G)$ with $\| f_2 \|_2 = 1$. A direct
calculation entails
\begin{align*}
\sum_{\alpha \in \Lambda_{m,n}} c_{\alpha} &= \sum_{\alpha \in \Lambda_{m,n}} \int_{\widehat{G}} \hat{f}_1 (\omega) \overline{\hat{f}_2 (\omega + \alpha)} \overline{t_{\alpha} (\omega)} \; d\mu_{\widehat{G}} (\omega) \\
&= \| h \|_2^{-1} \sum_{\alpha \in \Lambda_{m,n}} \frac{1}{\mu_{\widehat{G}} (U_k)^{1/2}} \int_{\omega_0+U_k} |t_{\alpha} (\omega) |^2 \; d\mu_{\widehat{G}} (\omega) \\
&= \frac{1}{\mu_{\widehat{G}} (U_k)^{1/2}} \bigg(  \sum_{\alpha \in \Lambda_{m,n}}  \int_{\omega_0+U_k} |t_{\alpha} (\omega) |^2 \; d\mu_{\widehat{G}} (\omega) \bigg)^{-1/2} \bigg(  \sum_{\alpha \in \Lambda_{m,n}}  \int_{\omega_0+U_k} |t_{\alpha} (\omega) |^2 \; d\mu_{\widehat{G}} (\omega) \bigg) \\
&= \bigg( \sum_{\alpha \in \Lambda_{m,n}} \frac{1}{\mu_{\widehat{G}} (U_k)} \int_{\omega_0+U_k}  |t_{\alpha} (\omega) |^2 \; d\mu_{\widehat{G}} (\omega) \bigg)^{1/2}
\end{align*}
for any $m,n \in \N$ and $k \ge K$.  An application of Lebesgue's
differentiation theorem \cite[Theorem 44.18]{MR0262773} next gives
\[
\lim_{k \to \infty} \bigg( \sum_{\alpha \in \Lambda_{m,n}}
\frac{1}{\mu_{\widehat{G}} (U_k)} \int_{\omega_0 + U_k} |t_{\alpha}
(\omega) |^2 \; d\mu_{\widehat{G}} (\omega) \bigg)^{1/2} = \bigg(
\sum_{\alpha \in \Lambda_{m,n}} |t_{\alpha} (w_0) |^2 \; \bigg)^{1/2}
\]
for any $m,n \in \N$.

\textbf{Step 3:} \emph{An $\eps$-argument.} For arbitrary $\eps > 0$, there exist $M, N \in \mathbb{N}$ such that, for
 all $m \ge M, n \ge N$,
\begin{align} \label{eq:distance_epsilon}
\bigg| \sum_{\alpha \in \Lambda} c_{\alpha} - \sum_{\alpha \in \Lambda_{m,n}} c_{\alpha}   \bigg| \leq \eps.
\end{align}
Using \eqref{eq:K_convergence} and \eqref{eq:distance_epsilon}, it
now follows by the triangle inequality that 
\[
 \bigg( \sum_{\alpha \in \Lambda_{m,n}} |t_{\alpha} (w_0) |^2 \;
 \bigg)^{1/2} \leq B + \eps
\]
for $m \ge M$ and $n \ge N$.  Since
$\sum_{\alpha \in \Lambda_{m,n}} |t_{\alpha} (w_0) |^2$ is a bounded
and monotone increasing sequence of $n$ and $m$, the limit
\[
 \bigg( \sum_{\alpha \in \Lambda} |t_{\alpha} (w_0) |^2 \;
 \bigg)^{1/2} = \sup_{m\ge M,n\ge N}  \bigg( \sum_{\alpha \in \Lambda_{m,n}} |t_{\alpha} (w_0) |^2 \;  \bigg)^{1/2} 
\]
exists. 

Since $\eps > 0$ was taken arbitrary, it follows that 
\begin{equation}
 \bigg( \sum_{\alpha
  \in \Lambda} |t_{\alpha} (\omega_0) |^2 \;  \bigg)^{1/2}\leq
B.
\label{eq:little-l2-bound-at-w0}
\end{equation}
Being a countable union of null sets, the complement of the set of
common Lebesgue points of $\abs{t_\alpha}^2$, $\alpha \in \Lambda$, is
a null set. We conclude that \eqref{eq:little-l2-bound-at-w0} holds
for $\mu_{\widehat{G}}$-a.e. $\omega_0 \in \ghat$, which completes the
proof for the case $E = \emptyset$.

Lastly, consider the case that
$\cup_{j \in J} \{T_{\gamma} g_{j,p} \}_{\gamma \in \Gamma_j, p \in
  P_j}$
satisfies the $1$-UCP with respect to an arbitrary
$E \in \mathcal{E}$. In this case the Fourier supports of $f_1$ and
$f_2$ might intersect $E$. If this happens, the functions $f_1$ and
$f_2$ should be approximated by functions from $\mathcal{D}_E (G)$,
see \cite[Remark 4]{JakobsenReproducing2014} for the details.
\end{proof}

\begin{remark}
  \begin{enumerate}[(i)]\item 
    In Theorem~\ref{thm:GTInec_little-l2}, the assumption of $1$-UCP
    with pointwise unconditionally convergence of the Fourier series
    of $w_f$ can be replaced by the simpler, but stronger, assumption
    that
    $\cup_{j \in J} \{T_{\gamma} g_{j,p} \}_{\gamma \in \Gamma_j, p
      \in P_j}$ satisfies that $\infty$-UCP.
\item While the existence of a $D'$-sequence in $\ghat$ is sufficient
  for the differentiation 
process on Lebesgue's integrals \cite[Theorem 44.18]{MR0262773}, it
may not be necessary. In fact, it is an open problem whether
Lebesgue's differentiation theorem holds on all second countable
locally compact abelian groups \cite[Section 7.1]{BowRos2014}. 
\end{enumerate}

\end{remark}

The $1$-UCP assumption in Theorem~\ref{thm:GTInec_little-l2} cannot be
dropped as demonstrated by Example~\ref{ex:bownik_example1} below. The
construction of the generalized shift-invariant system follows \cite[Example 3.2]{MR2066821}.

\begin{example} \label{ex:bownik_example1}
Let $G = \mathbb{Z}$. Let $N \in \mathbb{N}$
be such that $N \geq 2$. Define the lattices $\Gamma_j = N^j
\mathbb{Z}$ for $j \in \N$. Let $\tau_1=0$ and define $\tau_j$, $j \ge
2$, inductively as the smallest $t \in \Z$ in absolute value satisfying 
\begin{align} \label{eq:induction}
\left(\bigcupdot_{i=1}^{j-1} \left(\tau_i + N^i \Z \right)\right) \cap \left(t +
  N^j \Z\right) = \emptyset.
\end{align}
In case $t$ and $-t$ both are minimizers,  pick $\tau_j$ to be
positive. Then 
\[
 \Z = \bigcupdot_{j \in \N} \bigg( \tau_j + N^j \Z \bigg), 
\]
with the union being disjoint, see also \cite[Lemma
4.4]{bandwidth}. Define the generators
$g_j = \mathds{1}_{\tau_j} \in \ell^2 (\mathbb{Z})$ for
$j \in \mathbb{N}$. By construction, the generalized shift-invariant
system
$\cup_{j \in \mathbb{N}} \{ T_{\gamma} g_j \}_{\gamma \in \Gamma_j}$
is the canonical basis $\{\mathds{1}_{k} \}_{k \in \mathbb{Z}}$ for
$\ell^2 (\mathbb{Z})$ and thus it forms, in particular, a frame for
$\ell^2 (\mathbb{Z})$ with frame bounds $A = B = 1$. The system
$\cup_{j \in \mathbb{N}} \{ T_{\gamma} g_j \}_{\gamma \in \Gamma_j}$
satisfies the $1$-UCP only for the case $N = 2$ as shown in
\cite[Example 1]{bandwidth}. We now show that the bound
\eqref{eq:B_1-lower} fails for $N \ge 3$ in spite of the Bessel
property.

Observe that $\hat{g}_j \in L^2 (\mathbb{T})$ is given by $\hat{g}_j (\omega) = e^{2 \pi i \tau_j \omega}$
for $\omega \in \itvco{0}{1}$. Hence, for any $\alpha \in \bigcup_{j=1}^\infty
N^{-j}\Z$,
\[ 
t_\alpha(\omega)=\sum_{  j \in \N \; : \; \alpha \in N^{-j}\Z } \frac{1}{\covol
  (\Gamma_j)} \hat{g}_j (\omega) \overline{\hat{g}_j (\omega +
  \alpha)} = \sum_{  j \in \N \; : \; \alpha \in N^{-j}\Z } N^{-j} \myexp{-2 \pi i \alpha
  \tau_j} 
\]
for all $\omega \in \itvco{0}{1}$. Since $t_\alpha$ is independent of the variable $\omega \in
\itvco{0}{1}$, we fix an arbitrary $\omega \in \itvco{0}{1}$ in the following calculations. 

We rewrite $\bigcup_{j=1}^\infty
N^{-j}\Z$ as the following disjoint union:
\[ \bigcup_{j=1}^\infty
N^{-j}\Z = \mathbb{Z} \bigcupdot \bigg( \bigcupdot_{m=1}^\infty
N^{-m}(\Z\setminus N\Z) \bigg). \]
For $\alpha \in \mathbb{Z}$, a direct calculation gives
\[
t_{\alpha} (\omega) = \sum_{j = 1}^{\infty} N^{-j} = \frac{1}{N-1}.
\]
On the other hand, writing $\alpha \notin \mathbb{Z}$ as $\alpha = k N^{-m}$ for $k \in \Z \setminus
N\Z$ and $m \in \N$ yields
\[ 
t_\alpha(\omega) = \sum_{j=m}^\infty N^{-j} \myexp{-2 \pi i
  k N^{-m} \tau_j}.
\]
Hence
\[ 
  \abs{t_\alpha(\omega)} = N^{-m} \abs{\sum_{\ell=0}^\infty N^{-\ell} \myexp{-2 \pi i
  k N^{-m} \tau_{\ell+m}}}.
\]
Next, we claim that it then follows that
\[ \abs{t_\alpha(\omega)}  \ge N^{-m} \left(1-\sum_{\ell=1}^\infty N^{-\ell}\right) =  N^{-m}\,
\frac{N-2}{N-1} .\]
To see this claim, let $z_\ell \in \T$, $\ell \in \mathbb{N}_0$. By the triangle inequality, we have
\begin{multline*}
  \abs{\sum_{\ell=0}^\infty N^{-\ell} z_\ell} \ge \abs{z_0} - \abs{z_0-\sum_{\ell=0}^\infty N^{-\ell} z_\ell} = 1 -
\abs{\sum_{\ell=1}^\infty N^{-\ell} z_\ell} \ge 1 -
\sum_{\ell=1}^\infty N^{-\ell} =\frac{N-2}{N-1},
\end{multline*}
which proves the claim.

Let $N\ge 3$. Since for each $m \in \N$ there are infinitely many $k \in \Z\
\setminus N\Z$ with $\abs{t_{kN^{-m}}(\omega)}>N^{-m}/2$, we see
that \eqref{eq:B_1-lower} is violated as $B_2=\infty$. In fact, for
any $p \in \itvco{1}{\infty}$, we have
\[ 
\sum_{\alpha \in \bigcup_{j \in J} \Gamma_j^{\perp}}
\abs{t_\alpha(\omega)}^p = \infty 
\]
for all $\omega \in \itvco{0}{1}$. Thus, no $\ell^p$-norm of
$\{t_\alpha(\omega)\}_{\alpha \in \bigcup_{j \in J}
  \Gamma_j^{\perp}}$, with $\omega \in \itvco{0}{1}$, can be finite.
\end{example}

The discreteness of the group $G=\Z$ in
Example~\ref{ex:bownik_example1} is not crucial. In fact, the
construction is easily transferred to $L^2(\R)$, e.g., by starting
from the Gabor-like orthonormal basis $\{ T_k \myexp{2\pi i m \cdot}
\mathds{1}_{[0,1)} \}_{k,m \in \mathbb{Z}}$.   

\subsection{Sufficient conditions} 
\label{sec:main-result}
The following result, Theorem~\ref{thm:GTIsufficient_CC}, provides a
sufficient condition and estimates of the frame bounds for generalized
trans\-la\-tion-in\-vari\-ant frames. The proof of
Theorem~\ref{thm:GTIsufficient_CC} is based on the simple estimate
that a (generalized) Fourier series of an almost periodic function is
bounded from above by the sum of the modulus of its coefficients and
is bounded from below by the absolute value of its constant term minus
the sum of the other terms in modulus.

\begin{theorem} \label{thm:GTIsufficient_CC}
Let $\cup_{j \in J} \{T_{\gamma} g_{j,p} \}_{\gamma \in \Gamma_j, p \in P_j}$ be a generalized trans\-la\-tion-in\-vari\-ant system satisfying the $1$-UCP.
\begin{enumerate}[(i)]
\item Suppose $\cup_{j \in J} \{ T_{\gamma} g_{j, p} \}_{\gamma \in \Gamma_j, p \in P_j}$ satisfies
\begin{align} \label{eq:B_cc-t-alpha}
 B_1 := \esssup_{\omega \in \widehat{G}} \sum_{\alpha \in \bigcup_{j \in J} \Gamma_j^{\perp}} |t_{\alpha} (\omega)|  < \infty,
 \end{align}
then $\cup_{j \in J} \{ T_{\gamma} g_{j, p} \}_{\gamma \in \Gamma_j, p \in P_j}$ forms a Bessel system in $L^2 (G)$ with Bessel bound $B_1$.
\item Suppose $\cup_{j \in J} \{ T_{\gamma} g_{j, p} \}_{\gamma \in
    \Gamma_j, p \in P_j}$ satisfies \eqref{eq:B_cc-t-alpha} and 
\begin{align} \label{eq:A_cc-t-alpha}
 A_1 := &\essinf_{\omega \in \widehat{G}} \bigg(t_{0} (\omega) -  \sum_{\alpha \in \bigcup_{j \in J} \Gamma_j^{\perp} \setminus \{0\}} |t_{\alpha} (\omega) | \bigg)> 0, 
 \end{align}
then $\cup_{j \in J} \{ T_{\gamma} g_{j, p} \}_{\gamma \in \Gamma_j, p
  \in P_j}$ forms a frame for $L^2 (G)$ with lower bound $A_1$ and
upper bound $B_1$.
\end{enumerate}
\end{theorem}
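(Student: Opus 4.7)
The plan is to apply Proposition~\ref{prop:GTIgeneralisedfourierseries} and read the frame bounds directly off the Fourier--Bohr coefficients of $w_f$ for $f\in\cD_E(G)$. Writing $\Lambda:=\bigcup_{j\in J}\Gamma_j^{\perp}$ and recalling that $w_f(0)$ is precisely the middle term of the frame inequality~\eqref{eq:GTIframe}, the goal reduces to establishing $A_1\|f\|^2\le w_f(0)\le B_1\|f\|^2$ for all $f\in\cD_E(G)$. Since \eqref{eq:B_cc-t-alpha} forces $t_0(\omega)\le B_1$ almost everywhere, the local integrability condition~\eqref{eq:Calderon_LI} is automatic, and Proposition~\ref{prop:GTIgeneralisedfourierseries} supplies the generalized Fourier series $w_f=\sum_{\alpha\in\Lambda} c_\alpha\,\alpha$ with coefficients
\[ c_\alpha = \int_{\widehat G}\hat f(\omega)\,\overline{\hat f(\omega+\alpha)}\,t_\alpha(\omega)\,d\mu_{\widehat G}(\omega). \]

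The pivotal estimate is $\sum_{\alpha\in\Lambda}|c_\alpha|\le B_1\|f\|^2$. Applying Cauchy--Schwarz pointwise with the weight $|t_\alpha|$ gives
\[ |c_\alpha|^2\le \int_{\widehat G} |\hat f(\omega)|^2|t_\alpha(\omega)|\,d\mu_{\widehat G}(\omega)\,\cdot\,\int_{\widehat G}|\hat f(\omega+\alpha)|^2|t_\alpha(\omega)|\,d\mu_{\widehat G}(\omega), \]
and a further Cauchy--Schwarz in $\ell^2(\Lambda)$ bounds $\sum_\alpha|c_\alpha|$ by the geometric mean of the two iterated sums. By Tonelli and~\eqref{eq:B_cc-t-alpha}, the first iterated sum is at most $\int|\hat f(\omega)|^2\sum_\alpha|t_\alpha(\omega)|\,d\mu_{\widehat G}\le B_1\|f\|^2$. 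For the second, the substitution $\omega\mapsto\omega-\alpha$, the identity $|t_\alpha(\omega-\alpha)|=|t_{-\alpha}(\omega)|$ (immediate from the definition of $t_\alpha$ and the fact that $\kappa(\alpha)=\kappa(-\alpha)$), and the symmetry $\{-\alpha:\alpha\in\Lambda\}=\Lambda$ produce the same bound $B_1\|f\|^2$. Hence $\sum_\alpha|c_\alpha|\le B_1\|f\|^2<\infty$.

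The main obstacle is then upgrading this $\ell^1$-estimate to the pointwise identity $w_f(0)=\sum_\alpha c_\alpha$. Since $|\alpha(x)|=1$, absolute summability of the coefficients makes the formal series $\sum_\alpha c_\alpha\,\alpha$ converge uniformly on $G$ to a continuous, uniformly almost periodic function $\tilde w_f$. By 1-UCP, $w_f$ itself is almost periodic with Fourier--Bohr coefficients $c_\alpha$ (Proposition~\ref{prop:GTIgeneralisedfourierseries}), so uniqueness of Fourier--Bohr coefficients on the relevant class of almost periodic functions forces $w_f=\tilde w_f$; evaluation at $0$ yields $w_f(0)=\sum_\alpha c_\alpha$, and therefore $w_f(0)\le\sum_\alpha|c_\alpha|\le B_1\|f\|^2$ on $\cD_E(G)$. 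Fatou's lemma applied to the non-negative integrand of $w_f(0)$, combined with density of $\cD_E(G)$ in $L^2(G)$, extends the Bessel inequality to all of $L^2(G)$, proving part~(i).

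For part~(ii), isolate the zeroth term: $w_f(0)=c_0+\sum_{\alpha\ne 0}c_\alpha\ge c_0-\sum_{\alpha\ne 0}|c_\alpha|$. Replacing Cauchy--Schwarz on $|c_\alpha|$ by the AM--GM estimate $|\hat f(\omega)\hat f(\omega+\alpha)|\le\tfrac12(|\hat f(\omega)|^2+|\hat f(\omega+\alpha)|^2)$ and applying the same substitution/symmetry trick yields $\sum_{\alpha\ne 0}|c_\alpha|\le\int|\hat f(\omega)|^2\sum_{\alpha\in\Lambda\setminus\{0\}}|t_\alpha(\omega)|\,d\mu_{\widehat G}(\omega)$. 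Combining with $c_0=\int|\hat f|^2 t_0\,d\mu_{\widehat G}$ and~\eqref{eq:A_cc-t-alpha} gives
\[ w_f(0)\ge\int_{\widehat G}|\hat f(\omega)|^2\Bigl(t_0(\omega)-\sum_{\alpha\in\Lambda\setminus\{0\}}|t_\alpha(\omega)|\Bigr)d\mu_{\widehat G}(\omega)\ge A_1\|f\|^2. \]
The extension to all $f\in L^2(G)$ follows, once part~(i) is in hand, from boundedness of the frame operator and density of $\cD_E(G)$.
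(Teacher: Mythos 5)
Your proposal is correct and follows essentially the same route as the paper: the Fourier--Bohr series of $w_f$ from Proposition~\ref{prop:GTIgeneralisedfourierseries}, absolute summability of the coefficients giving uniform convergence and hence the pointwise identity $w_f(0)=\sum_\alpha c_\alpha$, and then reading off the bounds, with the extension to $L^2(G)$ by density. The only cosmetic difference is that in part~(i) you bound $\sum_\alpha\abs{c_\alpha}$ by a double Cauchy--Schwarz argument, whereas the paper uses throughout the AM--GM (Young) estimate that you yourself invoke in part~(ii); both yield the same bound $B_1\norm{f}^2$.
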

\begin{proof} 
  Suppose the system
  $\cup_{j \in J} \{ T_{\gamma} g_{j, p} \}_{\gamma \in \Gamma_j, p
    \in P_j}$
  satisfies condition \eqref{eq:B_cc-t-alpha} and the 1-UCP with
  respect to $E \in \mathcal{E}$. By definition of $w_f$ and the fact
  that $\mathcal{D}_E (G)$ is dense in $L^2(G)$, it suffices in (ii)
  to show that $A_1 \norm{f}^2 \le w_f(0) \le B_1 \norm{f}^2$ for all
  $f \in \cD_E(G)$, while in (i) it suffices to prove the upper bound.

  As a consequence of
  Proposition~\ref{prop:GTIgeneralisedfourierseries}, the auxiliary
  function $w_f : G \to \mathbb{C} $ possesses the generalized Fourier
  series
  $\sum_{\alpha \in \bigcup_{j \in J} \Gamma_j^{\perp}} c_\alpha
  \alpha$,
  where 
  $c_\alpha:=\int_{\widehat{G}} \hat{f} (\omega) \overline{\hat{f}
    (\omega + \alpha)} \overline{t_{\alpha} (\omega)} \;
  d\mu_{\widehat{G}} (\omega)$.
  It will first be shown that $w_f$ coincides point-wise with its
  generalized Fourier series. In order to do this, we show that the
  generalized Fourier series is uniformly convergent. An application
  of Beppo Levi's theorem and Young's inequality for products gives
\begin{align}
\sum_{\alpha \in \bigcup_{j \in J}
        \Gamma_j^{\perp} \setminus \{0\}} \abs{ c_\alpha} \nonumber 
&\leq \int_{\widehat{G}} \sum_{\alpha \in \bigcup_{j \in J}
         \Gamma_j^{\perp} \setminus \{0\}} |\hat{f} (\omega) \hat{f}
         (\omega + \alpha) t_{\alpha} (\omega) | \; d\mu_{\widehat{G}}
         (\omega) \nonumber \\
&\leq
\frac{1}{2} \int_{\widehat{G}}| \hat{f} (\omega) |^2 \sum_{\alpha \in
  \bigcup_{j \in J} \Gamma_j^{\perp} \setminus \{0\}}  | t_{\alpha}
  (\omega) | \; d\mu_{\widehat{G}} (\omega) \nonumber \\
& \quad \quad \quad \quad \quad \quad \quad + \frac{1}{2}
  \int_{\widehat{G}} \sum_{\alpha \in \bigcup_{j \in J}
  \Gamma_j^{\perp} \setminus \{0\}} |\hat{f} (\omega + \alpha) |^2 |
  t_{\alpha} (\omega) | \; d\mu_{\widehat{G}} (\omega) \nonumber \\
&=  \int_{\widehat{G}}| \hat{f} (\omega) |^2 \sum_{\alpha \in
  \bigcup_{j \in J} \Gamma_j^{\perp} \setminus \{0\}}  \abs{
  t_{\alpha} (\omega) } \; d\mu_{\widehat{G}} (\omega), \label{eq:Rf-estimate}
\end{align}
where equality follows by the change of variable
$\omega \mapsto \omega - \alpha$ and
$ \overline{t_{\alpha} (\omega - \alpha)} = t_{- \alpha}
(\omega)$.
Since
$\sum_{\alpha \in \bigcup_{j \in J} \Gamma_j^{\perp}} |c_{\alpha}| <
\infty $,
an application of the Weierstrass M-test yields that the generalized
Fourier series of $w_f$ converges uniformly to an almost periodic
function. By uniqueness of Fourier coefficients, it follows that
$w_f (x) = \sum_{\alpha \in \bigcup_{j \in J} \Gamma_j^{\perp}}
c_{\alpha} \alpha (x)$ pointwise for all $x \in G$.

Now, setting $x = 0$ in the Fourier series representation of $w_f$ and using
\eqref{eq:Rf-estimate} give
\begin{align*}
                   w_f(0) &= \sum_{\alpha \in \bigcup_{j \in J}
    \Gamma_j^{\perp} } c_\alpha \leq \int_{\widehat{G}} |\hat{f}(\omega)|^2 \sum_{\alpha \in \bigcup_{j \in J} \Gamma_j^{\perp}} |t_{\alpha} (\omega)| \; d\mu_{\widehat{G}} (\omega) \leq B_1 \| f \|^2_2
\end{align*}
for all $f \in \mathcal{D}_E (G)$. This shows (i). Assume now also
that the assumption in (ii) is satisfied. Then, by the triangle
inequality and \eqref{eq:Rf-estimate}, we have for all $f \in \cD_E(G)$

\begin{align*}
  w_f(0) &= \sum_{\alpha \in \bigcup_{j \in J}
    \Gamma_j^{\perp} } c_\alpha  
    \geq c_0 -  \absBig{
\sum_{\alpha \in \bigcup_{j \in J}    \Gamma_j^{\perp} \setminus
\{0\}} c_\alpha\, }  \\
&\ge \int_{\ghat}
\abssmall{\hat{f}(\omega)}^2 \biggl( t_0(\omega)- \sum_{\alpha \in \bigcup_{j \in J}    \Gamma_j^{\perp} \setminus
\{0\} } \abs{t_\alpha(\omega)} \biggr) d\mu_{\widehat{G}} (\omega) \ge
A_1 \| f \|^2_2
\end{align*}
as desired.
\end{proof}

The frame bound estimates of Theorem~\ref{thm:GTIsufficient_CC} are
optimal for tight frames. That is, for a generalized
trans\-la\-tion-in\-vari\-ant system
$\cup_{j \in J} \{ T_{\gamma} g_{j, p} \}_{\gamma \in \Gamma_j, p \in
  P_j}$
that satisfies the 1-UCP and forms a tight frame, the estimates in
Theorem~\ref{thm:GTIsufficient_CC} recover precisely the frame bound
of the given frame. This simple observation is stated as the next
result.

\begin{proposition} \label{prop:tight_CC}
Let $\cup_{j \in J} \{ T_{\gamma} g_{j, p} \}_{\gamma \in \Gamma_j, p
  \in P_j}$ be a generalized trans\-la\-tion-in\-vari\-ant system satisfying
the 1-UCP. Suppose $\cup_{j \in J} \{ T_{\gamma} g_{j, p}
\}_{\gamma \in \Gamma_j, p \in P_j}$ forms a tight frame for $L^2 (G)$
with frame bound $A > 0$. Then $A = A_1 = B_1$. 
 \begin{proof}
   Suppose the system
   $\cup_{j \in J} \{ T_{\gamma} g_{j, p} \}_{\gamma \in \Gamma_j, p
     \in P_j}$
   is a tight frame for $L^2 (G)$ with bound $A > 0$. By~\cite[Theorem
   3.4]{JakobsenReproducing2014} and~\cite[Theorem 3.11]{bandwidth},
   it holds, for any $\alpha \in \bigcup_{j \in J} \Gamma_j^{\perp}$,
   that
 \[ t_{\alpha} (\omega) = A \delta_{\alpha, 0} \]
 for $\mu_{\widehat{G}}$-a.e. $\omega \in \widehat{G}$. Hence $\sum_{\alpha \in
  \bigcup_{j \in J} \Gamma_j^{\perp} \setminus \{0\}}
\abs{t_{\alpha}(\omega)}=0$ almost everywhere on $\widehat{G}$ and the conclusion follows. 
 \end{proof}
\end{proposition}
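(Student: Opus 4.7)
The plan is to invoke the known tight-frame characterization of GTI systems that expresses the tight frame property exactly in terms of the auto-correlation functions. Under the 1-UCP assumption, a GTI system $\cup_{j \in J}\{T_\gamma g_{j,p}\}_{\gamma \in \Gamma_j, p \in P_j}$ forms a tight frame with bound $A > 0$ if, and only if, $t_\alpha(\omega) = A\, \delta_{\alpha,0}$ for $\mu_{\widehat{G}}$-a.e. $\omega \in \widehat{G}$ and every $\alpha \in \bigcup_{j \in J} \Gamma_j^\perp$. This is exactly the content of the results cited from \cite{JakobsenReproducing2014} and \cite{bandwidth}, so no new work is required to obtain the identity.

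Having this, the computation of $A_1$ and $B_1$ becomes immediate. Since $t_\alpha \equiv 0$ a.e.\ for every nonzero $\alpha \in \bigcup_{j\in J}\Gamma_j^\perp$, the sum $\sum_{\alpha \neq 0} |t_\alpha(\omega)|$ vanishes almost everywhere, so the essential supremum in the definition of $B_1$ reduces to $\esssup_\omega |t_0(\omega)| = A$, and the essential infimum defining $A_1$ collapses to $\essinf_\omega t_0(\omega) = A$. Thus $A_1 = B_1 = A$, as claimed.

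There is essentially no obstacle: the proposition is a short consequence of the already-established characterization, and the only thing to verify is that the definitions of $A_1$ and $B_1$ yield the claimed value when only the $\alpha = 0$ term survives. In writing it up, the one thing to be careful about is to cite precisely the tight-frame characterizations being used so that the reader sees which hypothesis (1-UCP and tightness) guarantees the pointwise vanishing of the off-diagonal auto-correlations.
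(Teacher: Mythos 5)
Your proposal is correct and follows essentially the same route as the paper: both invoke the tight-frame characterization $t_\alpha(\omega)=A\,\delta_{\alpha,0}$ (from the cited results of \cite{JakobsenReproducing2014} and \cite{bandwidth}) and then observe that the off-diagonal terms vanish, so $A_1=B_1=A$ immediately from the definitions.
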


\subsection{Comparison of frame bound estimates}
\label{sec:cc-cond-absol}
In this section we compare the frame bound estimates provided by
Theorem~\ref{thm:GTIsufficient_CC} with known estimates. For this, we
state the following result~\cite[Proposition
3.7]{JakobsenReproducing2014}.

\begin{proposition} \label{prop:GTI_sufficient_absCC}
Let $\cup_{j \in J} \{ T_{\gamma} g_{j, p} \}_{\gamma \in \Gamma_j, p \in
  P_j}$ be a generalized trans\-la\-tion-in\-vari\-ant system.
\begin{enumerate}[(i)]
\item Suppose the system $\cup_{j \in J} \{ T_{\gamma} g_{j, p} \}_{\gamma \in \Gamma_j, p \in
  P_j}$ satisfies 
\begin{align} \label{eq:GTI_absCC}
 B' := \esssup_{\omega \in \widehat{G}} \sum_{j \in J} \frac{1}{\covol (\Gamma_j)} \int_{P_j} \sum_{\alpha \in \Gamma_j^{\perp}}  | \hat{g}_{j,p} (\omega)  \hat{g}_{j,p} (\omega +\alpha) |  d\mu_{P_j} (p)< \infty,
 \end{align}
 then
 $\cup_{j \in J} \{ T_{\gamma} g_{j, p} \}_{\gamma \in \Gamma_j, p \in
   P_j}$ forms a Bessel family in $L^2 (G)$ with Bessel bound $B'$.
\item Suppose the system $\cup_{j \in J} \{ T_{\gamma} g_{j, p} \}_{\gamma \in \Gamma_j, p \in
  P_j}$ satisfies and 
\begin{align*} 
 A' := &\essinf_{\omega \in \widehat{G}} \bigg(\sum_{j \in J} \frac{1}{\covol(\Gamma_j)} \int_{P_j} | \hat{g}_{j,p} (\omega)|^2 \; d\mu_{P_j} (p) \\
 & \quad \quad \quad \quad - \sum_{j \in J} \frac{1}{\covol(\Gamma_j)} \int_{P_j} \sum_{\alpha \in \Gamma_j^{\perp} \setminus \{0\}} | \hat{g}_{j,p} (\omega) \hat{g}_{j,p} (\omega + \alpha) | \; d\mu_{P_j} (p) \bigg)> 0,
 \end{align*}
 then
 $\cup_{j \in J} \{ T_{\gamma} g_{j, p} \}_{\gamma \in \Gamma_j, p \in
   P_j}$
 forms a frame for $L^2 (G)$ with lower bound $A'$ and upper bound
 $B'$.
\end{enumerate}
\end{proposition}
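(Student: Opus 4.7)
The plan is to derive both the Bessel bound and the lower frame bound by a direct Walnut-type calculation on the dense subspace $\mathcal{D}_E(G) \subset L^2(G)$, \emph{without} appealing to the $1$-UCP or to almost-periodic Fourier series. The hypothesis~\eqref{eq:GTI_absCC} is precisely what is needed to make every formal manipulation absolutely convergent, so that no auxiliary convergence assumption is required.

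First, for fixed $f \in \mathcal{D}_E(G)$, $j \in J$ and $p \in P_j$, I would note that $\hat f$ is bounded with compact support, so for each $\omega$ only finitely many $\alpha \in \Gamma_j^{\perp}$ produce a non-zero integrand $\hat f(\omega)\overline{\hat f(\omega+\alpha)}$. Applying Plancherel on $\ghat$ together with Parseval's identity on the compact quotient $\ghat/\Gamma_j^{\perp}$ yields the Walnut-type identity
\begin{equation*}
\int_{\Gamma_j}\abs{\langle f,T_\gamma g_{j,p}\rangle}^2 d\mu_{\Gamma_j}(\gamma) = \frac{1}{\covol(\Gamma_j)}\sum_{\alpha \in \Gamma_j^{\perp}} \int_{\ghat} \hat f(\omega)\,\overline{\hat f(\omega+\alpha)}\,\overline{\hat g_{j,p}(\omega)}\, \hat g_{j,p}(\omega+\alpha)\, d\mu_{\ghat}(\omega).
\end{equation*}
Taking absolute values, the hypothesis~\eqref{eq:GTI_absCC} together with $\hat f \in L^\infty_c(\ghat)$ supplies an integrable dominating function on $\ghat$, and Fubini--Tonelli then permits integration over $p \in P_j$, summation over $j \in J$, and regrouping the terms by $\alpha \in \bigcup_{j \in J}\Gamma_j^{\perp}$. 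This produces the absolutely convergent representation
\begin{equation*}
\sum_{j \in J}\int_{P_j}\int_{\Gamma_j}\abs{\langle f,T_\gamma g_{j,p}\rangle}^2 d\mu_{\Gamma_j}(\gamma)\, d\mu_{P_j}(p) = \sum_{\alpha \in \bigcup_{j \in J}\Gamma_j^{\perp}} c_\alpha,
\end{equation*}
with $c_\alpha$ as in~\eqref{eq:Fourier_coefficients}.

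For the Bessel bound (i), I would estimate $\abs{c_\alpha}$ by pulling the modulus inside each integral, then apply Young's inequality $\abs{\hat f(\omega)\hat f(\omega+\alpha)} \le \tfrac12(\abssmall{\hat f(\omega)}^2+\abssmall{\hat f(\omega+\alpha)}^2)$, and finally perform the substitution $\omega \mapsto \omega-\alpha$ in the half containing $\abssmall{\hat f(\omega+\alpha)}^2$, using that $\Gamma_j^{\perp}$ is closed under negation. A further interchange of the remaining sums collapses the estimate to
\begin{equation*}
\sum_\alpha \abs{c_\alpha} \le \int_{\ghat}\abssmall{\hat f(\omega)}^2 \sum_{j \in J}\frac{1}{\covol(\Gamma_j)}\int_{P_j}\sum_{\alpha \in \Gamma_j^{\perp}} \abs{\hat g_{j,p}(\omega) \hat g_{j,p}(\omega+\alpha)}\, d\mu_{P_j}(p)\, d\mu_{\ghat}(\omega) \le B'\norm[2]{f}^2.
\end{equation*}
For the lower bound in (ii), I would isolate $c_0 = \int_{\ghat}\abssmall{\hat f(\omega)}^2 t_0(\omega) d\mu_{\ghat}(\omega)$, apply the reverse triangle inequality $\sum_\alpha c_\alpha \ge c_0 - \sum_{\alpha \neq 0}\abs{c_\alpha}$, and use the same Young-plus-substitution estimate on the tail to deduce the lower bound $A'\norm[2]{f}^2$. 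Density of $\mathcal{D}_E(G)$ in $L^2(G)$ then extends both inequalities to all of $L^2(G)$.

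The main delicate point is justifying the successive applications of Fubini--Tonelli and the reindexing of the double series in $(j,\alpha)$ as a single series over $\alpha \in \bigcup_{j \in J}\Gamma_j^{\perp}$. These hinge entirely on~\eqref{eq:GTI_absCC}, which functions as a uniform-in-$\omega$ dominating hypothesis on the full absolute-value expression; this is precisely the reason why \eqref{eq:GTI_absCC} is strictly stronger than~\eqref{eq:B_cc-t-alpha} and why the present proposition, in contrast to Theorem~\ref{thm:GTIsufficient_CC}, requires no separate $1$-UCP assumption.
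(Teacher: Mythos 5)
Your argument is correct, but note that the paper does not actually prove Proposition~\ref{prop:GTI_sufficient_absCC}: it is quoted verbatim from~\cite[Proposition 3.7]{JakobsenReproducing2014} as a known benchmark against which the new CC-based estimates are compared. What you have written is essentially the proof given in that reference: establish the Walnut-type representation $\sum_j\int_{P_j}\int_{\Gamma_j}\abs{\langle f,T_\gamma g_{j,p}\rangle}^2 = \sum_\alpha c_\alpha$ for $f\in\cD_E(G)$, observe that the absolute CC-condition~\eqref{eq:GTI_absCC} implies the $\alpha$-LIC (so all Fubini--Tonelli interchanges and the reindexing over $\alpha\in\bigcup_j\Gamma_j^\perp$ are licensed), and then run the Young-inequality-plus-substitution estimate with the modulus placed inside the $(j,p)$-sum. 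This is genuinely different from, and more elementary than, the route the present paper takes for its own Theorem~\ref{thm:GTIsufficient_CC} (almost periodic $w_f$, generalized Fourier series, $1$-UCP); the price of the stronger hypothesis~\eqref{eq:GTI_absCC} is exactly that no auxiliary convergence assumption is needed, as you say. An alternative in-paper derivation would be to combine Theorem~\ref{thm:GTIsufficient_CC} with the Lemma of Section~\ref{sec:cc-cond-absol} and the remark that the absolute CC-condition implies the $\alpha$-LIC and hence the $1$-UCP, using $B_1\le B'$ and $A_1\ge A'$.

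One small imprecision: you invoke ``Parseval's identity on the compact quotient $\ghat/\Gamma_j^\perp$.'' In the GTI setting $\Gamma_j$ is only closed and co-compact, so $\Gamma_j^\perp$ is discrete but $\ghat/\Gamma_j^\perp\cong\widehat{\Gamma_j}$ is compact only when $\Gamma_j$ is itself discrete (e.g.\ it fails for $\Gamma_j=G$). What you need is Plancherel for the dual pair $\Gamma_j \leftrightarrow \ghat/\Gamma_j^\perp$ together with Weil's formula for the periodization over the discrete group $\Gamma_j^\perp$; this holds with the normalization $1/\covol(\Gamma_j)$ under the paper's measure conventions and does not require compactness of the quotient, so the slip is cosmetic and the argument survives unchanged.
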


In~\cite{JakobsenReproducing2014}, the term \emph{absolute
  CC-con\-di\-tion} was used for condition \eqref{eq:GTI_absCC}.  The
important difference between the CC-con\-di\-tion
\eqref{eq:B_cc-t-alpha} and the absolute CC-con\-di\-tion
\eqref{eq:GTI_absCC} is the placement of the absolute sign in the
summand. In the CC-con\-di\-tion, it is possible to have phase
cancellations within each auto-correlation function while the absolute
CC-con\-di\-tion prohibits such cancellations. It is a simple
observation that a generalized trans\-la\-tion-in\-vari\-ant system
satisfying the absolute CC-con\-di\-tion also satisfies the
CC-con\-di\-tion.

\begin{lemma}
  Suppose
  $\cup_{j \in J} \{T_{\gamma} g_{j,p} \}_{\gamma \in \Gamma_j, p \in
    P_j}$
  satisfies the absolute CC-con\-di\-tion \eqref{eq:GTI_absCC}. Then
  $\cup_{j \in J} \{T_{\gamma} g_{j,p} \}_{\gamma \in \Gamma_j, p \in
    P_j}$ also satisfies the CC-con\-di\-tion \eqref{eq:B_cc-t-alpha}.
\begin{proof}
  Suppose the system
  $\cup_{j \in J} \{T_{\gamma} g_{j,p} \}_{\gamma \in \Gamma_j, p \in
    P_j}$
  satisfies the absolute CC-con\-di\-tion \eqref{eq:GTI_absCC}. Then
  an application of Beppo Levi's theorem gives
\begin{multline*}
  \sum_{j \in J} \sum_{\alpha \in \Gamma_j^{\perp}} \bigg|
  \frac{1}{\covol(\Gamma_j)} \int_{P_j}    \hat{g}_{j,p} (\omega)
  \overline{ \hat{g}_{j,p} (\omega + \alpha) } \; d\mu_{P_j} (p)
  \bigg| \\ \leq \sum_{j \in J} \frac{1}{\covol(\Gamma_j)} \int_{P_j} \sum_{\alpha \in \Gamma_j^{\perp}}  | \hat{g}_{j,p} (\omega) \hat{g}_{j,p} (\omega + \alpha) |  d\mu_{P_j} (p) 
   < \infty 
\end{multline*}
for $\mu_{\widehat{G}}$-a.e. $\omega \in \widehat{G}$. Using the
absolute convergence of the series, a re-ordering of the summation
does not affect the convergence. Thus
\[ 
 \esssup_{\omega \in \widehat{G}} \sum_{\alpha \in \bigcup_{j \in J}
   \Gamma_j^{\perp}} \bigg| \sum_{ j \in J \; : \; \alpha \in
   \Gamma_j^{\perp} } \frac{1}{\covol(\Gamma_j)}  \int_{P_j}
 \hat{g}_{j,p} (\omega) \overline{ \hat{g}_{j,p} (\omega + \alpha)  }
 \; d\mu_{P_j} (p) \bigg|  < \infty, 
\]
as required.
\end{proof}
\end{lemma}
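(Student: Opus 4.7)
The plan is to deduce the CC-condition from the absolute CC-condition by a straightforward application of the triangle inequality followed by a Tonelli-type rearrangement, since all relevant quantities are nonnegative.

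First, I would fix an $\omega \in \widehat{G}$ for which the absolute CC-condition holds with a finite value (the essential supremum being finite, this happens a.e.). Applying the triangle inequality inside the defining expression of $t_\alpha(\omega)$ yields
\begin{align*}
\abs{t_\alpha(\omega)} &= \absBig{\sum_{j \in \kappa(\alpha)} \frac{1}{\covol(\Gamma_j)} \int_{P_j} \hat{g}_{j,p}(\omega) \overline{\hat{g}_{j,p}(\omega+\alpha)} \, d\mu_{P_j}(p)} \\
&\leq \sum_{j \in \kappa(\alpha)} \frac{1}{\covol(\Gamma_j)} \int_{P_j} \abs{\hat{g}_{j,p}(\omega) \hat{g}_{j,p}(\omega+\alpha)} \, d\mu_{P_j}(p),
\end{align*}
where the interchange of absolute value with the integral over $P_j$ is justified because the inner integrand is integrable by the finiteness of the absolute CC-sum at $\omega$.

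Next, I would sum over $\alpha \in \bigcup_{j \in J} \Gamma_j^\perp$. Since the summands on the right-hand side are all nonnegative, Tonelli's theorem permits unrestricted reordering of the double sum over $\alpha$ and $j \in \kappa(\alpha)$. The key combinatorial observation is that the index set $\{(\alpha,j) : \alpha \in \bigcup_{i \in J}\Gamma_i^\perp,\, j \in \kappa(\alpha)\}$ coincides with $\{(\alpha,j) : j \in J,\, \alpha \in \Gamma_j^\perp\}$, so swapping the order of summation produces
\[
\sum_{\alpha \in \bigcup_{j \in J}\Gamma_j^\perp} \abs{t_\alpha(\omega)} \leq \sum_{j \in J} \frac{1}{\covol(\Gamma_j)} \int_{P_j} \sum_{\alpha \in \Gamma_j^\perp} \abs{\hat{g}_{j,p}(\omega)\hat{g}_{j,p}(\omega+\alpha)} \, d\mu_{P_j}(p),
\]
where a second application of Tonelli moves the integral over $P_j$ outside the inner $\alpha$-sum.

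Finally, taking the essential supremum over $\omega \in \widehat{G}$ on both sides yields $B_1 \leq B' < \infty$, which is exactly the CC-condition \eqref{eq:B_cc-t-alpha}. There is no real obstacle in the argument; the only thing requiring a moment of care is verifying that the reordering of sums and integrals is legitimate, but this is automatic from nonnegativity via Tonelli's theorem, and the finiteness hypothesis on the absolute CC-sum guarantees that the integrand defining $t_\alpha(\omega)$ is summable in $j$ for a.e.\ $\omega$ so that the initial triangle inequality is valid.
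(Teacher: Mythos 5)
Your proposal is correct and follows essentially the same route as the paper: bound $\abs{t_\alpha(\omega)}$ by the triangle inequality, then use nonnegativity (Tonelli/Beppo Levi) to reorder the double sum over $(j,\alpha)$ and interchange it with the integral over $P_j$, reducing everything to the absolute CC-sum. The paper merely performs these two steps in the opposite order, first establishing absolute convergence of the doubly indexed series and then reordering, which is an immaterial difference.
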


Jakobsen and the first named author~\cite{JakobsenReproducing2014}
show that a generalized trans\-la\-tion-in\-vari\-ant system
satisfying the absolute CC-con\-di\-tion automatically satisfies the
$\alpha$-LIC.  Thus the $1$-UCP is implicitly assumed in the estimate
\eqref{eq:GTI_absCC}.

The generalized trans\-la\-tion-in\-vari\-ant system in Example
\ref{ex:bownik_example1} with $N = 2$ satisfies the frame bound
estimates based on the CC-con\-di\-tion, but dramatically fails the
estimates based on the absolute CC-con\-di\-tion as demonstrated in
the next example.

\begin{example} \label{ex:CC_ACC} Let $G=\mathbb{Z}$. Consider the
  system
  $\cup_{j \in \mathbb{N}} \{T_{\gamma} g_j \}_{\gamma \in \Gamma_j}$
  in $\ell^2 (\mathbb{Z})$ with $\Gamma_j = 2^j \mathbb{Z}$ and
  $g_j = \mathds{1}_{\tau_j}$, where
  $(\tau_j)_{j \in \mathbb{N}} \subset \mathbb{Z}$ is chosen as in
  \eqref{eq:induction}. This system forms a frame for
  $\ell^2 (\mathbb{Z})$ with frame bounds $A = B = 1$. For
  $\omega \in [0,1)$, a direct calculation gives
 \[
 \sum_{j \in \mathbb{N}} \frac{1}{\covol (\Gamma_j)} \sum_{\alpha \in
   \Gamma_j^{\perp}} |\hat{g}_j (\omega) \hat{g}_j (\omega + \alpha) |
 = \sum_{j \in \mathbb{N}} \frac{1}{2^j} \sum_{\alpha \in
   \Gamma_j^{\perp}} |e^{2 \pi i \tau_j \alpha} | = \sum_{j \in
   \mathbb{N}} \frac{1}{2^j} \# (\Gamma_j^{\perp}) = \sum_{j \in
   \mathbb{N}} 1 = \infty.
 \]
 Thus
 $\cup_{j \in \mathbb{N}} \{T_{\gamma} g_j \}_{\gamma \in \Gamma_j}$
 fails the estimate \eqref{eq:GTI_absCC}.  On the other hand, it
 follows by Proposition \ref{prop:tight_CC} that
 $\cup_{j \in \mathbb{N}} \{T_{\gamma} g_j \}_{\gamma \in \Gamma_j}$
 satisfies the estimates \eqref{eq:B_cc-t-alpha} and
 \eqref{eq:A_cc-t-alpha} with bounds $A_1 = B_1 = 1$.
\end{example}

The discrepancy between the frame bound estimates in Proposition
\ref{prop:GTI_sufficient_absCC} and the estimates in
Theorem~\ref{thm:GTIsufficient_CC} might occur even for well-known
orthonormal bases. Indeed, the Meyer wavelet is an example of an
orthonormal basis in $L^2 (\mathbb{R})$ for which the estimates based
on the absolute CC-con\-di\-tion give the poor estimates $A'=-1$ and
$B'=3$, see~\cite[p. 984]{Dau90}. However, the frame bound estimates
in Theorem \ref{thm:GTIsufficient_CC} give the correct frame bounds,
namely $A_1=B_1=1$. For a direct verification of the characterizing
equations $t_{\alpha} = \delta_{\alpha, 0}$ for the Meyer wavelet, the
interested reader to Daubechies' book \cite[Section 4.2.1]{Dau92}.

Finally, we remark that Casazza, Christensen and
Janssen~\cite{MR1814424} give an example of a Gabor system forming a
Bessel system in $L^2 (\mathbb{R})$, but where
$\sum_\alpha \abs{t_\alpha(\omega)}=\infty$ for a.e.\ $\omega \in
\R$.
This demonstrates that for Bessel systems both the CC-con\-di\-tion
and the absolute CC-con\-di\-tion can fail even though the LIC and
thus $\alpha$-LIC and the $1$-UCP hold.

\section{Applications and examples} 
\label{sec:examples}

In this section the sufficient conditions given in Theorem
\ref{thm:GTIsufficient_CC} will be considered for special types of
generalized trans\-la\-tion-in\-vari\-ant systems.  In the examples
presented in this section, the focus will be on explicit formulas for
the auto-correlation functions
$t_{\alpha} : \widehat{G} \to \mathbb{C}$ and the associated
\emph{remainder function}
\begin{equation*} 
R:\ghat \to \itvcc{0}{\infty}, \quad R(\omega)=\sum_{\alpha \in
  \bigcup_{j \in J} \Gamma_j^{\perp} \setminus \{0\}} \abs{t_{\alpha}(\omega)}, 
\end{equation*}
which are the main ingredients in the estimates in Theorem
\ref{thm:GTIsufficient_CC}. Here, it should be understood that the
(formal) expression for $t_{\alpha}$ might only be well-defined once
we impose the CC-con\-di\-tion.  Stating the necessary condition in
Theorem~\ref{thm:GTInec_little-l2} in each special case from these
formulas is straightforward and is left to the reader.

\subsection{Gabor systems}
\label{sec:gabor-systems}
Given a countable index set $J$, let
$\{ g_{j} \}_{j \in J} \subset L^2 (G)$. Let $\Gamma \subseteq G$ be a
closed, co-compact subgroup and let $\Lambda \subseteq \widehat{G}$ be
such that equipping it with a $\sigma$-algebra $\Sigma_{\Lambda}$ and
a measure $\mu_{\Lambda}$ gives a measure space
$(\Lambda, \Sigma_{\Lambda}, \mu_{\Lambda})$ satisfying the standard
hypotheses.  The (semi) co-compact Gabor system associated with the
pair $(\Gamma, \Lambda)$ is the collection of functions
\[ 
\{M_{\lambda} T_{\gamma} g_{j} \}_{\lambda \in \Lambda, \gamma \in
  \Gamma, j \in J} = \{\lambda(\cdot) g_{j} (\cdot - \gamma) \}_{
  \lambda \in \Lambda, \gamma \in \Gamma, j \in J}, 
\]
where $M_{\lambda} f (x) = \lambda (x) f (x)$ denotes the modulation
operator on $L^2 (G)$.  The Gabor system
$\{M_{\lambda} T_{\gamma} g_{j} \}_{\lambda \in \Lambda, \gamma \in
  \Gamma, j \in J}$
cannot be expressed as a generalized trans\-la\-tion-in\-vari\-ant
system.  However, since
$\abs{\innerprod{f}{M_{\lambda} T_{\gamma} g_{j}}} =
\abs{\innerprod{f}{T_{\gamma}M_{\lambda} g_{j}}} $
for all $f \in L^2(G)$, the Gabor system
$\{M_{\lambda} T_{\gamma} g_{j} \}_{\lambda \in \Lambda, \gamma \in
  \Gamma, j \in J}$
forms a Bessel system or a frame if, and only if, the corresponding
(generalized) trans\-la\-tion-in\-vari\-ant system
$\{T_{\gamma} M_{\lambda} g_j\}_{\lambda \in \Lambda, \gamma \in
  \Gamma, j \in J}$
with $g_{j,\lambda} = M_{\lambda} g_j$ forms a Bessel system or a
frame.  The auto-correlation functions $t_{\alpha}$ associated with
$\{T_{\gamma} M_{\lambda} g_{j} \}_{\lambda \in \Lambda, \gamma \in
  \Gamma, j \in J}$ can (formally) be written as
\[ t_{\alpha} (\omega) = \sum_{j \in J} \int_{\Lambda} \hat{g}_{j}
(\omega - \lambda) \overline{\hat{g}_{j} (\omega - \lambda - \alpha)}
\; d\mu_{\Lambda} (\lambda) \]
for $\alpha \in \Gamma^{\perp}$. Any trans\-la\-tion-in\-vari\-ant
system satisfying the CC-con\-di\-tion also satisfies the
$\alpha$-LIC. Thus an application of
Theorem~\ref{thm:GTIsufficient_CC} gives the frame bound estimates
\eqref{eq:B_cc-t-alpha} and \eqref{eq:A_cc-t-alpha}, where the $0$th
auto-correlation function $t_0$ is given by
\begin{equation}
t_{0} (\omega) = \sum_{j \in J} \int_{\Lambda} |\hat{g}_{j}
(\omega - \lambda) |^2 \; d\mu_{\Lambda} (\lambda) \label{eq:B0-gabor-freq}
\end{equation}
and the remainder function $R :  \widehat{G} \to [0,\infty]$ by
\begin{equation}
 R(\omega) = \sum_{\alpha \in \Gamma^{\perp} \setminus \{0\}} \bigg|
 \sum_{j \in J} \int_{\Lambda}  \hat{g}_{j} (\omega - \lambda)
 \overline{\hat{g}_{j} (\omega - \lambda - \alpha)} \; d\mu_{\Lambda} (\lambda) \bigg|.
\label{eq:A0-gabor-freq}
\end{equation}
The frame bound estimates associated with \eqref{eq:B0-gabor-freq} and
\eqref{eq:A0-gabor-freq} allow for phase cancellations over the
modulation parameter $\lambda \in \Lambda$. Moreover, if $\Lambda$ is
a closed subgroup, we only need to take the essential supremum and
infimum in \eqref{eq:B_cc-t-alpha} and \eqref{eq:A_cc-t-alpha},
respectively, over a fundamental domain of $\Lambda$ in $\ghat$.  For
singly generated Gabor frames in $L^2 (\mathbb{R}^d)$ associated with
a pair of full-rank lattices $(\Lambda, \Gamma)$, the frame bound
estimates \eqref{eq:B_cc-t-alpha} and \eqref{eq:A_cc-t-alpha} using
\eqref{eq:B0-gabor-freq} and \eqref{eq:A0-gabor-freq} recover
precisely the frame bound estimates by Ron and Shen~\cite{MR1460623,
  MR1350650}.

The sufficient conditions for Gabor frames are often formulated in the
time domain. To do this, we switch the role of $\Gamma$ and $\Lambda$
and consider the Gabor system
$\{M_{\lambda} T_{\gamma} g_{j} \}_{\lambda \in \Lambda, \gamma \in
  \Gamma, j \in J} = \{T_{\lambda} \mathcal{F}^{-1} T_{\gamma} g_{j}
\}_{\lambda \in \Lambda, \gamma \in \Gamma, j \in J}$,
where $\mathcal{F}^{-1}$ denotes the inverse Fourier transform. In
this way, one obtains auto-correlation functions
$s_{\alpha} : G \to \mathbb{C}, \alpha \in \Lambda^{\perp}$, given by
\[ 
s_{\alpha} (x) := \sum_{j \in J} \int_{\Gamma} \overline{g_{j} (x -
  \gamma - \alpha)} g_{j} (x - \gamma) \; d\mu_{\Gamma} (\gamma), 
\]
provided the series converges. 
Hence, if
\[
 B_1 := \esssup_{x \in G} \sum_{\alpha \in
  \Lambda^\perp} \abs{s_{\alpha}(x)} < \infty
\]
and
\[
 A_1 := \essinf_{x \in G} \left( s_0(x) - \sum_{\alpha \in
  \Gamma^\perp\setminus \{0\}} \abs{s_{\alpha}(x)} \right)>0,
\]
then
$\{M_{\lambda} T_{\gamma} g_{j} \}_{\lambda \in \Lambda, \gamma \in
  \Gamma, j \in J}$
is a frame for $L^2(\R^d)$ with bounds $A_1$ and $B_1$. For singly
generated Gabor frames in $L^2 (\mathbb{R}^d)$ associated with a pair
of full-rank lattices $(\Lambda, \Gamma)$, these estimates recover
precisely \cite[Proposition 6.5.5]{MR1843717}.

\subsection{Wavelet systems} 
\label{sec:wavelet-systems}

Let $\aut(G)$ denote the collection of all bi-continuous group
homomorphisms on $G$. For an automorphism $a \in \aut(G)$, let $|a|$
denote its modulus, i.e., the unique positive constant such that
\begin{align*} 
 \int_G f (a (x)) \; d\mu_G (x) = |a| \int_G f (x) \; d\mu_G (x) 
 \end{align*}
for all $f \in L^1 (G)$. Denote by $D_{a} f(x) := |a|^{1/2} f(a(x))$ the unitary dilation operator on $L^2 (G)$.

Let $J$ and $L$ be countable index sets, let
$\{\psi_\ell\}_{\ell \in L} \subset L^2 (G)$, let
$\mathcal{A} := \{a_j \}_{j \in J} \subset \aut(G)$ and let
$\Gamma \subseteq G$ be a closed, co-compact subgroup. The wavelet
system in $L^2 (G)$, associated with the pair $(\mathcal{A}, \Gamma)$,
is the collection of functions
\begin{equation} \label{eq:wavelet-dilation-group}
 \{D_{a} T_{\gamma} \psi_{\ell} \}_{a \in \mathcal{A}, \gamma \in \Gamma,\ell
   \in L} = \{ | a_j |^{1/2} \psi_{\ell} (a_j (\cdot ) - \gamma)
 \}_{j \in J, \gamma \in \Gamma, \ell \in L}. 
\end{equation}
By considering the commutation relation
$D_{a} T_{\gamma} = T_{a^{-1} (\gamma)} D_{a}$ for $a \in \mathcal{A}$
and $\gamma \in \Gamma$, the wavelet system
\eqref{eq:wavelet-dilation-group} can be written as the generalized
trans\-la\-tion-in\-vari\-ant system
$\cup_{j \in J} \{T_{\gamma} g_{j, p} \}_{\gamma \in \Gamma_j}$ with
$\Gamma_j = a_j^{-1}( \Gamma)$ and $g_{j,p} = D_{a_j} \psi_{\ell}$ for
$j \in J$ and $p = \ell \in P$ with $P = L$ equipped with the counting
measure.

The \emph{adjoint} of an automorphism $a \in \aut(G)$ is the
automorphism $\hat{a} : \widehat{G} \to \widehat{G}$ defined by
$\hat{a} (\omega) = \omega \circ a$ for $\omega \in
\widehat{G}$.
Using this notion, the annihilators $\Gamma_j^{\perp}$ of $\Gamma_j$
for $j \in J$ can be written as
$\Gamma_j^{\perp} = (a_j^{-1} (\Gamma))^{\perp} = \hat{a}_j
(\Gamma^{\perp})$,
cf. \cite[Proposition 6.5]{BowRos2014}. For
$\alpha \in \bigcup_{j \in J} \hat{a}_j (\Gamma^{\perp})$, the
auto-correlation function $t_{\alpha} : \widehat{G} \to \mathbb{C}$
can be formally written as
\begin{align*}
  t_{\alpha} (\omega) &= \sum_{\ell \in L} \sum_{j \in \kappa (\alpha)} \frac{1}{\covol(\Gamma_j)} \widehat{(D_{a_j} \psi_{\ell})} (\omega) \overline{\widehat{(D_{a_j} \psi_{\ell})}(\omega + \alpha)} \\
                      &= \sum_{\ell \in L} \sum_{j \in \kappa (\alpha)} \frac{|a_j| }{\covol (\Gamma_j)} \hat{\psi}_{\ell} (\hat{a}_j^{-1} (\omega)) \overline{\hat{\psi}_{\ell} (\hat{a}_j^{-1} (\omega + \alpha))}, 
\end{align*}
where
$\kappa (\alpha) := \{ j \in J \; | \; \alpha \in \hat{a}_j
(\Gamma^{\perp}) \}$.
Observe that $\kappa (0) = J$. Therefore, for wavelet systems
satisfying the 1-UCP, an application of
Theorem~\ref{thm:GTIsufficient_CC} yields the frame bound estimates as
in \eqref{eq:B_cc-t-alpha} and \eqref{eq:A_cc-t-alpha}, where
\begin{align} \label{eq:Calderon-wavelet}
t_0 (\omega) =  \sum_{\ell \in L} \sum_{j \in \kappa (\alpha)} \frac{|a_j|}{\covol (\Gamma_j)}  | \hat{\psi}_{\ell} (\hat{a}_j^{-1} (\omega)) |^2
\end{align}
 is the Calder\'on sum, and the remainder function $R : \widehat{G} \to [0,\infty]$ takes the form
\begin{equation} \label{eq:R-wavelet}
R(\omega)=\sum_{\alpha \in \bigcup_{j \in J} \hat{a}_j (
  \Gamma^{\perp} ) \setminus\{0\}} \biggl\vert \sum_{\ell \in L} \sum_{j \in \kappa (\alpha)}\frac{|a_j| }{\covol (\Gamma_j)}  \hat{\psi}_{\ell} (\hat{a}_j^{-1} (\omega)) \overline{\hat{\psi}_{\ell} (\hat{a}_j^{-1} (\omega + \alpha)) } \bigg\vert . 
\end{equation}
Thus for all generators and for all scales in $\kappa(\alpha)$, we
have the possibility of cancellations in the estimates for each
$\alpha \in \bigcup_{j \in J} \hat{a}_j (\Gamma^{\perp})
\setminus\{0\}$.
This possibility of cancellations is in contrast to known sufficient
conditions and frame bound estimates for wavelet systems based on the
absolute CC-con\-di\-tion. These latter sufficient conditions use the
remainder function $\tilde{R} : \widehat{G} \to [0,\infty]$ given by
\begin{align} 
\label{eq:remainder_absCC} 
  \tilde{R} (\omega) = \sum_{j
  \in J} \frac{|a_j|}{\covol(\Gamma_j)} \sum_{\alpha \in \hat{a}_j(
  \Gamma^{\perp}) \setminus \{0\}} \sum_{\ell \in L} \absBig{
  \hat{\psi}_{\ell} (\hat{a}^{-1}_j \omega)
  \overline{\hat{\psi}_{\ell} (\hat{a}_j^{-1} (\omega + \alpha))} },
\end{align}
in which only the modulus of the generating functions are
considered. To wrap up the discussion, we state the following result.

\begin{theorem} \label{thm:wavelet} Given countable index sets $J, L$,
  let $\{ \psi_{\ell} \}_{\ell \in L} \subset L^2 (G)$, let
  $\{ a_j \}_{j \in J} \subset \aut(G)$ and let $\Gamma \subseteq G$
  be a closed, co-compact subgroup. Suppose the system
  $\{D_{a_j} T_{\gamma} \psi_{\ell} \}_{j \in J, \gamma \in
    \Gamma,\ell \in L}$ satisfies the 1-UCP and satisfies
\begin{align}
\label{eq:B_cc-t-alpha-wavelet} 
b_1 := \esssup_{\omega \in \widehat{G}}  ( t_0 (\omega) + R(\omega) ) < \infty 
\end{align}
and 
\begin{align}
\label{eq:A_cc-t-alpha-wavelet}
a_1 := \essinf_{\omega \in \widehat{G}} ( t_0 (\omega) - R(\omega) ) > 0,
\end{align}
where $t_{0}$ and $R$ are given in \eqref{eq:Calderon-wavelet} and
\eqref{eq:R-wavelet}, respectively. Then
$\{D_{a_j} T_{\gamma} \psi_{\ell} \}_{j \in J, \gamma \in \Gamma,\ell
  \in L}$ forms a frame for $L^2 (G)$ with bounds $a_1$ and $b_1$.
\end{theorem}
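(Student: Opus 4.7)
The plan is to reduce Theorem \ref{thm:wavelet} to the general sufficient condition in Theorem \ref{thm:GTIsufficient_CC} by recognizing the wavelet system as an instance of a GTI system. First, I would invoke the commutation relation $D_{a_j} T_\gamma = T_{a_j^{-1}(\gamma)} D_{a_j}$ for $a_j \in \aut(G)$ and $\gamma \in \Gamma$ to rewrite the wavelet system \eqref{eq:wavelet-dilation-group} in the translation-invariant form
\[
\bigcup_{j \in J} \{T_\gamma g_{j,\ell}\}_{\gamma \in \Gamma_j, \, \ell \in L},
\]
with $\Gamma_j := a_j^{-1}(\Gamma)$ a closed co-compact subgroup of $G$, $g_{j,\ell} := D_{a_j}\psi_\ell$, and $P_j := L$ equipped with the counting measure. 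The standing hypotheses (I)--(III) then hold trivially, and the $1$-UCP is given by assumption.

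Next, I would translate the objects in Theorem \ref{thm:GTIsufficient_CC} into wavelet data. The annihilators satisfy $\Gamma_j^\perp = \hat{a}_j(\Gamma^\perp)$, so the index set $\kappa(\alpha) = \{j \in J : \alpha \in \Gamma_j^\perp\}$ from the general GTI framework coincides with the $\kappa(\alpha)$ appearing in \eqref{eq:Calderon-wavelet} and \eqref{eq:R-wavelet}. Combining the Fourier-dilation identity $\widehat{D_{a_j}\psi_\ell}(\omega) = |a_j|^{-1/2} \hat{\psi}_\ell(\hat{a}_j^{-1}(\omega))$ with the covolume scaling of $\Gamma_j$ under $a_j$, a direct substitution into the definition of the auto-correlation function in Proposition \ref{prop:GTIgeneralisedfourierseries} yields precisely the expression \eqref{eq:Calderon-wavelet} for $t_0(\omega)$ and the expression \eqref{eq:R-wavelet} for $R(\omega) = \sum_{\alpha \in \bigcup_j \Gamma_j^\perp \setminus\{0\}} |t_\alpha(\omega)|$.

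With these identifications in place, the hypotheses \eqref{eq:B_cc-t-alpha-wavelet} and \eqref{eq:A_cc-t-alpha-wavelet} of Theorem \ref{thm:wavelet} are exactly the conditions \eqref{eq:B_cc-t-alpha} and \eqref{eq:A_cc-t-alpha} of Theorem \ref{thm:GTIsufficient_CC} applied to this GTI system. Invoking Theorem \ref{thm:GTIsufficient_CC} then gives the desired frame inequality with lower bound $a_1$ and upper bound $b_1$. There is no real obstacle: the argument is essentially a translation of the general GTI result into wavelet language, and the only care required is the bookkeeping of the constants $|a_j|$ and $\covol(\Gamma_j)$, which follow from the change-of-variable formula on $G$ and the standard duality between automorphisms of $G$ and their adjoints on $\widehat{G}$.
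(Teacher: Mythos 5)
Your reduction is exactly the paper's argument: the discussion preceding Theorem~\ref{thm:wavelet} rewrites the wavelet system as the GTI system $\cup_{j}\{T_\gamma g_{j,\ell}\}_{\gamma\in a_j^{-1}(\Gamma),\ell\in L}$ with $g_{j,\ell}=D_{a_j}\psi_\ell$ via the commutation relation, identifies $\Gamma_j^{\perp}=\hat{a}_j(\Gamma^{\perp})$, computes the auto-correlation functions in the form \eqref{eq:Calderon-wavelet} and \eqref{eq:R-wavelet}, and then invokes Theorem~\ref{thm:GTIsufficient_CC}. The only point worth making explicit is that $b_1=B_1$ because $t_0\ge 0$, so $t_0(\omega)+R(\omega)=\sum_{\alpha}\abs{t_\alpha(\omega)}$; with that observation your proof is complete and matches the paper's.
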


The wavelet system in \eqref{eq:wavelet-dilation-group} is defined
with respect to an arbitrary family of automorphisms
$\mathcal{A} \subseteq \aut(G)$. For such general systems, the LIC,
and hence $\alpha$-LIC and $1$-UCP, are not necessarily satisfied
whenever the system satisfies the CC-con\-di\-tion. However, under
additional assumptions on the family $\mathcal{A} \subseteq \aut(G)$,
simple sufficient conditions and characterizations for the LIC are
known. For example, for a family $\{a_j\}_{j \in J} \subset \aut(G)$
for which the adjoints $\{\hat{a}_j \}_{j \in J}$ are \emph{expanding}
in the sense of \cite[Definition 18]{barbieri2017calderon}, the LIC is
automatically satisfied for any system satisfying the
CC-con\-di\-tion. In particular, for a cyclic group
$\mathcal{A} = \langle a \rangle$ generated by $a \in \aut(G)$,
several simple sufficient conditions for the LIC are known. For
example, if the underlying group $G$ possesses a compact open
subgroup, the dilation group can be assumed to be expanding in the
sense of \cite{benedetto2004wavelet, benedetto2004examples}.  For
systems on such groups, the LIC possesses a simple characterization
\cite{MR2283810}.  For a general locally compact abelian group $G$, it
is shown in \cite{MR2283810} that the LIC for wavelet systems
associated to $\mathcal{A} = \langle a \rangle$ is equivalent to
locally integrability of the Calder\'on sum $t_0$, provided that the
adjoint automorphisms are expansive in the sense of \cite[Proposition
4.9]{MR2283810}. See also~\cite[Proposition 2.7]{MR2746669} for the
same result on $G = \mathbb{R}^d$. In this latter setting, the
characterization of the LIC holds in fact for any wavelet system
satisfying the so-called \emph{lattice counting estimate}. In
\cite{BownikNonexpanding2015}, Bownik and the first named author show
that the lattice counting estimate holds for all dilations
$A \in \mathrm{GL}_d (\mathbb{R})$ with $\abs{\det{A}}\neq 1$ and for
almost every translation lattice $\Gamma$ with respect an invariant
probability measure on the set of lattices.  As a consequence,
Theorem~\ref{thm:GTInec_little-l2} and \ref{thm:GTIsufficient_CC} are
applicable for almost all wavelet systems in $L^2(\R^d)$ in the
probabilistic sense of \cite{BownikNonexpanding2015}.

The remainder of this subsection is devoted to two examples for which
phase cancellations in \eqref{eq:R-wavelet} can occur and for which
such cancellations cannot be expected. Both examples take place in
$L^2 (\mathbb{R}^d)$.  In this setting, any automorphism is given by
$x \mapsto Ax$ for some $A \in \mathrm{GL}_d (\mathbb{R})$.  For such
an automorphism, the modulus reads $|\det A|$ and the adjoint is
$A^T$. A discrete, co-compact subgroup $\Gamma \subseteq \mathbb{R}^d$
is a full-rank lattice in $\mathbb{R}^d$, i.e.,
$\Gamma = C \mathbb{Z}^d$ for some $C \in \mathrm{GL}_d
(\mathbb{R})$.
The annihilator $\Gamma^{\perp}$ of a full-rank lattice
$\Gamma \subseteq \mathbb{R}^d$ can be identified with the dual
lattice $\Gamma^{\ast} = C^{\sharp} \mathbb{Z}^d$, where
$C^{\sharp} := (C^T)^{-1}$.
 
\begin{example} \label{ex:transcedental_dilation} Let
  $A \in \mathrm{GL}_d (\mathbb{R})$, let $B := A^T$ and let
  $\Gamma = C \mathbb{Z}^d$ be a full-rank lattice in $\mathbb{R}^d$
  satisfying $\Gamma^\ast \cap B^j \Gamma^\ast = \{0\}$ for all
  $j \in \Z\setminus\{0\}$. Examples of such pairs $(A,\Gamma)$ are
  $B = \beta I$ with $I$ denoting the identity matrix, $\Gamma =\Z^d$,
  and $\beta \in \mathbb{R}$ being such that
  $\beta^j \notin \mathbb{Q}$ for all
  $j \in \mathbb{Z} \setminus \{0\}$. Now, since $B^j \Gamma^\ast$,
  $j \in \Z$, are disjoint outside the origin, it follows that the set
  $\kappa(\alpha)$ is a singleton for each
  $\alpha \in \bigcup_{j \in \mathbb{Z}} B^j
  \Gamma^\ast\setminus\{0\}$.
  Therefore, the remainder function $R : \mathbb{R}^d \to [0,\infty]$
  takes the form
\begin{align*}
  R(\omega)&=\frac{1}{\abs{\det{C}}}\sum_{\alpha \in \bigcup_{j \in \mathbb{Z}} B^j
             \Gamma^\ast\setminus\{0\}} \abs{\sum_{\ell \in L} \hat{\psi}_{\ell} (B^{-j} \omega)
             \overline{\hat{\psi}_{\ell} (B^{-j}(\omega + \alpha))}}  \\
           &= \frac{1}{\abs{\det{C}}}\sum_{j \in \Z} \sum_{k \in \Gamma^\ast\setminus\{0\}} \abs{\sum_{\ell \in L} \hat{\psi}_{\ell} (B^{-j} \omega)
             \overline{\hat{\psi}_{\ell} (B^{-j}\omega + k)}}.
\end{align*}
Consequently, phase cancellation between scales cannot occur in the
estimates in Theorem~ \ref{thm:wavelet}.  This observation fits
precisely with a result by
Laugesen~\cite{MR2280189}. In~\cite{MR2280189}, it is proved that for
wavelet systems in $L^2(\R)$ with transcendental dilations $a > 0$ and
integer translates, which in particular implies that
$\bigcap_{j \in \mathbb{Z}} a^j \Z = \{0\}$, no cancellations between
scales can happen for \emph{any kind} of frame bound estimate based on
$w_f(x)$. Note that despite the fact that no phase cancellations can
happen, the estimate is still optimal for tight frames. This
phenomenon is due to the fact that the characterizing equations for
tight wavelet systems with expansive dilation $A$ satisfying
$\bigcap_{j \in \mathbb{Z}} B^j \Gamma^\ast = \{0\}$ are very
restrictive on properties of $\psi_\ell$. For example, Riesz bases
possessing this property have to be combined MSF
wavelets~\cite{MR1891729,MR1924878, chui2000orthonormal}.
\end{example}

In the previous example it was assumed that the lattices
$B^j \Gamma^\ast$, $j \in \Z$, are disjoint outside the origin. The
next example assumes that the involved lattices are nested.

\begin{example} \label{ex:integer_dilation} Let
  $A \in \mathrm{GL}_d (\mathbb{R})$, let $B := A^T$ and let
  $\Gamma = C \mathbb{Z}^d$ be a full-rank lattice in $\mathbb{R}^d$
  satisfying $B \Gamma^\ast \subset \Gamma^\ast$. In case
  $\Gamma=\Z^d$, this assumption is equivalent with $A$ being
  integer-valued.  The union
  $\bigcup_{j \in \mathbb{Z}} B^j \Gamma^\ast \setminus\{0\} $ can be
  re-written as the \emph{disjoint} union
  $ \bigcup_{m \in \mathbb{Z}} B^m (\Gamma^\ast \setminus
  B\Gamma^\ast)$.
  For $\alpha=B^m q$, where $m\in \Z$ and
  $q\in \Gamma^\ast \setminus B\Gamma^\ast$, we have that
  $ \kappa(\alpha) =\setprop{j \in \Z}{j \le m}. $ Therefore, the
  remainder function $R : \mathbb{R}^d \to [0,\infty]$ takes the form
\begin{align}
R(\omega)
&= \frac{1}{\abs{\det{C}}}\sum_{m \in \Z} \sum_{q \in \Gamma^\ast\setminus B\Gamma^\ast}
  \abs{\sum_{j=-\infty}^m \sum_{\ell \in L} \hat{\psi}_{\ell} (B^{-j} \omega)
  \overline{\hat{\psi}_{\ell} (B^{-j}(\omega + B^m q))}}\nonumber  \\ 
&= \frac{1}{\abs{\det{C}}}\sum_{m \in \Z} \sum_{q \in \Gamma^\ast\setminus B\Gamma^\ast}
  \abs{\sum_{n=0}^\infty \sum_{\ell \in L} \hat{\psi}_{\ell} (B^{n+m} \omega)
  \overline{\hat{\psi}_{\ell} (B^{n}(B^m \omega + q))}} .
\label{eq:R-nested}
\end{align}
Since the functions $t_0$ and $R$ are $B$-dilation periodic, i.e.,
$t_0(B\omega)=t_0(\omega)$ and $R(B\omega)=R(\omega)$ for
a.e. $\omega\in \R^d$, the estimates \eqref{eq:B_cc-t-alpha-wavelet}
and \eqref{eq:A_cc-t-alpha-wavelet} read
 \begin{align}  \label{eq:B_cc-t-alpha_wavelet2}
 b_1 &= \esssup_{\omega \in B(\Omega)\setminus \Omega} \left(\frac{1}{\abs{\det{C}}}\sum_{\ell \in L}\sum_{j \in \Z} \abs{\hat{\psi}_{\ell} (B^j \omega)}^2
 +
   R(\omega)\right) 
\end{align}
and
\begin{align}
 \label{eq:A_cc-t-alpha_wavelet2}
 a_1 &= \essinf_{\omega \in B(\Omega)\setminus \Omega} \left(\frac{1}{\abs{\det{C}}}\sum_{\ell \in L}\sum_{j \in \Z} \abs{\hat{\psi}_{\ell} (B^j \omega)}^2 - R(\omega)\right),
 \end{align}
 where $\Omega:=B(0,1)$ is the unit ball in $\R^d$, and
 $R : \mathbb{R}^n \to [0,\infty]$ is given as in \eqref{eq:R-nested}.
 For univariate wavelets with $A=B=2$ and $\Gamma=c\Z$, $c>0$, these
 estimates coincide\footnote{The frame bound estimates
   \eqref{eq:B_cc-t-alpha_wavelet2} and
   \eqref{eq:A_cc-t-alpha_wavelet2} are slightly improved versions of
   the estimates that occur in~\cite[Theorem 2.9]{Dau90}. The
   improvement boils in essence down to a change of variable and
   taking suprema and infima differently than in the original proof.}
 with Tchamitchian's estimates as communicated by
 Daubechies~\cite{Dau90, Dau92}

 To show that the frame bound estimates from Theorem~\ref{thm:wavelet}
 improve the sufficient condition based on the remainder function
 \eqref{eq:remainder_absCC}, note that \eqref{eq:remainder_absCC} in
 the special case considered in this example simply reads
\begin{equation*}
\tilde{R}(\omega)= \frac{1}{\abs{\det{C}}}\sum_{j \in \Z} \sum_{\alpha \in \Gamma^\ast\setminus \{0\}}
  \sum_{\ell \in L} \abs{\hat{\psi}_{\ell} (B^{j} \omega)
  \overline{\hat{\psi}_{\ell} (B^{j}\omega + \alpha)}}. 
\end{equation*}
Now to see that $R(\omega) \le \tilde{R}(\omega)$ for a.e.
$\omega\in \R^n$, one simply uses the triangle inequality and notes
that there is a bijection between the indices
$(m,n,q) \in (\Z,\N,\Gamma^\ast\setminus B\Gamma^\ast)$ and the
indices $(j,\alpha)\in \Z\times \Gamma^\ast\setminus\{0\}$ given by
\[
(m,n,q) \mapsto (j,\alpha), \quad \text{where } \alpha=B^nq \text{ and } j=n+m. 
\]
\end{example}

For $\Gamma=\Z^d$, the above two examples show the two extremes on the
possible phase cancellations of Theorem~\ref{thm:GTIsufficient_CC}
that happen for integer dilations and certain irrational dilations.
For a rational dilation matrix $A\in \mathrm{GL}_d (\Q)$, frame bound
estimates with phase cancellations in \eqref{eq:R-wavelet} over
infinitely many scales are clearly also possible.  In fact, Laugesen
remarked already in~\cite{MR2280189} that this would be possible for
rational dilation in dimension one, such dilations necessarily being
expansive. Recall that the analysis in the present paper does not
require that the dilation is expansive, only that the $1$-UCP is
satisfied.

\subsection{Composite wavelets and shearlet systems} 
\label{sec:shearlet-systems}

Consider the Cartesian product $I \times J$ for two countable index
sets $I$ and $J$. Let $ A_i, B_j \in \mathrm{GL}_d (\mathbb{R})$ for
$i \in I$ and $j \in J$. Let $\Gamma = C \mathbb{Z}^d$ be a full-rank
lattice in $\mathbb{R}^d$. The wavelet system associated with the pair
$(\{A_i B_j\}_{(i,j) \in I \times J}, \Gamma)$ is a collection of
functions of the form
  \[ \{ D_{A_i B_j} T_{\gamma} \psi_{\ell} \}_{i \in I, j \in J,
    \gamma \in \Gamma, \ell \in L} 
\]
  and forms a so-called \emph{wavelet system with composite dilations}
  in $L^2 (\mathbb{R}^d)$, see e.g., \cite{MR2207836}. One usually
  assumes that one of the two family of matrices, say
  $\set{A_i}_{i \in I}$, is volume preserving. We will assume that
  $A_i^T$, $i \in I$, acts invariant on $\Gamma^\ast$, that is,
  $A_i^T \Gamma^\ast = \Gamma^\ast$, e.g., in case $\Gamma=\Z^d$, this
  assumption reads $A_i \in \mathrm{SL}_d (\mathbb{Z})$.  Therefore,
  $\Gamma_{(i,j)}^{\perp} = B_j^T A_i^T \Gamma^* =B_j^T \Gamma^*$ for
  $(i,j) \in I \times J$. Thus, for composite wavelet systems
  satisfying the 1-UCP, an application of Theorem
  \ref{thm:wavelet} yields the frame bound estimates
  \eqref{eq:B_cc-t-alpha-wavelet} and \eqref{eq:A_cc-t-alpha-wavelet}, where
  \[ t_0 (\omega) = \frac{1}{|\det C|} \sum_{i \in I} \sum_{j \in J}
  \sum_{\ell \in L} \Bigl| \hat{\psi}_{\ell} ( A^{\sharp}_{i}
  B_j^{\sharp} \omega) \Bigr|^2 \] and
  \[ R(\omega) = \frac{1}{|\det C|} \sum_{\alpha \in \bigcup_{j \in J}
    B_j^T \Gamma^* \setminus \{0\}} \biggl\vert \sum_{\ell \in L} \sum_{i
    \in I} \sum_{j \in \kappa(\alpha)}\hat{\psi}_{\ell} (
  A^{\sharp}_{i} B_j^{\sharp} \omega) \overline{\hat{\psi}_{\ell} (
    A^{\sharp}_{i} B_j^{\sharp} (\omega + \alpha))} \biggr\vert \]
  with
  $\kappa (\alpha) := \{ j \in J \; | \; \alpha \in B_j^T \Gamma^*
  \setminus \{0\} \}$.

The classical shearlet system is a special case of wavelets with
composite dilations. For simplicity we restrict our attention to
$L^2 (\mathbb{R}^2)$, but we refer to \cite[section 3.4]{MR2207836} for a
discussion of shearlet systems in $L^2 (\mathbb{R}^d)$. One defines
\[ A=\begin{pmatrix}
4 & 0 \\
0 & 2
\end{pmatrix}
\quad
\text{and}
\quad
S
= 
\begin{pmatrix}
1 & 1 \\
0 & 1
\end{pmatrix},
\]
and considers the wavelet system associated with the pair
$(\{S^k A ^j\}_{j,k\in\Z},\Gamma)$, where $\Gamma=C\Z^2$ for some
$C \in \mathrm{GL}_d(\R)$. For the \emph{classical shearlet system} of
the form
$\{ D_{S^k A^j } T_{\gamma} \psi_\ell \}_{j,k \in \Z, \gamma \in
  \Gamma, \ell \in L}$
we find as above that the corresponding functions
$t_0 : \mathbb{R}^2 \to \mathbb{C}$ and $R : \R^2 \to [0,\infty]$ are
formally given as
  \begin{align} \label{eq:shearlet_calderon}
   t_0 (\omega) =  \frac{1}{\abs{\det{C}}}\sum_{j \in \Z} \sum_{k \in \Z}  \sum_{\ell \in L} 
   \Bigl| \hat{\psi}_\ell( (S^{\sharp})^k  A^{-j}
  \omega) \Bigr|^2 
  \end{align} 
  and
  \begin{align} \label{eq:shearlet_R}
   R(\omega) = \frac{1}{|\det C|} \sum_{m \in \Z} \sum_{q \in \Gamma^\ast\setminus A\Gamma^\ast}
  \abs{\sum_{n=0}^\infty  \sum_{k \in \Z}  \sum_{\ell \in
      L}\hat{\psi}_\ell ((S^{\sharp})^k A^{n+m}  \omega)
  \overline{\hat{\psi}_\ell ((S^{\sharp})^k A^{n}(A^m \omega + q))}} .
 \end{align}
 Since any shearlet system that satisfies the CC-con\-di\-tion
 satisfies the $\alpha$-LIC, an application of
 Theorem~\ref{thm:GTIsufficient_CC} yields the following result.

\begin{theorem} \label{thm-shearlet}
Let $L$ be a countable index set, let $\{\psi_{\ell} \}_{\ell \in L} \subset L^2 (\R^2)$ and let $\Gamma \subset \mathbb{R}^2$ be a full-rank lattice. Suppose the shearlet system $\{ D_{S^k
  A^j } T_{\gamma} \psi_\ell \}_{j,k \in \Z, \gamma \in \Gamma, \ell
  \in L}$ satisfies 
  \[ b_1 := \esssup_{\omega \in \R^2} (t_0(\omega) + R(\omega)) < \infty \]
  and
  \[ a_1 := \essinf_{\omega \in \R^2} (t_0 (\omega) - R(\omega)) > 0, \]
  where $t_0$ and $R$ are given in \eqref{eq:shearlet_calderon} and \eqref{eq:shearlet_R}, respectively.
 Then the shearlet system $\{ D_{S^k
  A^j } T_{\gamma} \psi_\ell \}_{j,k \in \Z, \gamma \in \Gamma, \ell
  \in L}$ forms a frame for $L^2 (\R^2)$ with bounds $a_1$ and $b_1$.
\end{theorem}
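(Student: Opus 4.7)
The plan is to view the classical shearlet system as a wavelet system with composite dilations, as treated earlier in this subsection, and then to specialize the general composite-wavelet formulas to the shearlet setting before invoking Theorem~\ref{thm:GTIsufficient_CC}. Concretely, take $I=J=\Z$, $A_i = S^i$, and $B_j = A^j$; then $\abs{\det A_i}=1$ for every $i$, so $\set{S^k}_{k\in\Z}$ plays the role of the volume-preserving family. Since $S \in \mathrm{SL}_2(\Z)$, we have $S^T \Gamma^\ast = \Gamma^\ast$ whenever $\Gamma$ is $S$-invariant (e.g.\ $\Gamma=\Z^2$), which is the standing assumption on $\set{A_i}$ from the composite-wavelet discussion. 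Using the commutation relation $D_{S^k A^j} T_\gamma = T_{(S^k A^j)^{-1}\gamma} D_{S^k A^j}$, the shearlet system is unitarily equivalent to the GTI system indexed by $(j,k)\in\Z^2$ with subgroups $\Gamma_{j,k}=(S^k A^j)^{-1}\Gamma$ and generators $g_{j,k,\ell}=D_{S^k A^j}\psi_\ell$.

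With this set-up I would compute the annihilators and the auto-correlation functions. From $S^T \Gamma^\ast = \Gamma^\ast$ one obtains $\Gamma_{j,k}^\perp = (S^k A^j)^T \Gamma^\ast = A^j \Gamma^\ast$, independent of $k$, while $\covol(\Gamma_{j,k}) = \abs{\det A}^{-j}\abs{\det C}$. Since $A$ is integer-valued and expansive, the lattices $A^j\Gamma^\ast$ are nested, so the disjoint-union decomposition from Example~\ref{ex:integer_dilation} applies: $\bigcup_{j\in\Z} A^j\Gamma^\ast\setminus\{0\} = \bigcupdot_{m\in\Z} A^m(\Gamma^\ast \setminus A\Gamma^\ast)$, and for $\alpha=A^m q$ with $q\in\Gamma^\ast\setminus A\Gamma^\ast$ one has $\kappa(\alpha)=\setprop{(j,k)\in\Z^2}{j\le m}$. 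Substituting $\widehat{D_{S^k A^j}\psi_\ell}(\omega)=\abs{\det A}^{-j/2}\hat\psi_\ell((S^\sharp)^k A^{-j}\omega)$ into the definition of $t_\alpha$, and re-indexing the off-diagonal sum via $n=m-j\ge 0$, yields precisely \eqref{eq:shearlet_calderon} for $t_0$ and \eqref{eq:shearlet_R} for $R(\omega)=\sum_{\alpha\neq 0}\abs{t_\alpha(\omega)}$.

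To close the argument I need the $1$-UCP. The hypothesis $b_1<\infty$ subsumes the CC-con\-di\-tion \eqref{eq:B_cc-t-alpha}, which by the remark immediately preceding the statement implies the $\alpha$-LIC, and hence the $1$-UCP. The assumptions $b_1<\infty$ and $a_1>0$ then become exactly \eqref{eq:B_cc-t-alpha} and \eqref{eq:A_cc-t-alpha} for this GTI system with matching constants, so part (ii) of Theorem~\ref{thm:GTIsufficient_CC} delivers the frame property with bounds $a_1$ and $b_1$. The main obstacle is purely bookkeeping: verifying that the re-indexing of the double sum over $(j,k)$ and the $\abs{\det C}^{-1}$ normalization transfer correctly from Proposition~\ref{prop:GTIgeneralisedfourierseries} to \eqref{eq:shearlet_calderon}--\eqref{eq:shearlet_R}. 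There is no new analytic content beyond the composite-wavelet reduction already carried out earlier in Section~\ref{sec:shearlet-systems}.
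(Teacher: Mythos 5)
Your proposal is correct and follows essentially the same route as the paper: the paper likewise realizes the classical shearlet system as a composite-dilation wavelet system, reads off $t_0$ and $R$ from the composite-wavelet formulas via the nested-lattice decomposition of Example~\ref{ex:integer_dilation}, observes that the CC-con\-di\-tion implies the $\alpha$-LIC (hence the $1$-UCP) for shearlet systems, and then applies Theorem~\ref{thm:GTIsufficient_CC}. Your explicit remark that $S^T \Gamma^\ast = \Gamma^\ast$ is needed for $\kappa(\alpha)$ to be independent of the shear index is a hypothesis the paper leaves implicit in its choice of $\Gamma$.
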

The estimates in Theorem \ref{thm-shearlet} should be compared with
previously used sufficient conditions for shearlet systems that are
based on the absolute CC-con\-di\-tion and that do not allow for phase
cancellations~\cite{kutyniok2007construction}.

The rest of this subsection is devoted to cone-adapted shearlet
systems.  Such shearlets play a more important role in applications
than the classical shearlets as they treat directions in an almost
uniform manner. The cone-adapted shearlet system is a finite union of
shift-invariant systems and wavelet systems with composite
dilations. To introduce these systems, we define $A_1=A$, $S_1=S$,
\[
A_2= \begin{pmatrix}
2 & 0 \\
0 & 4
\end{pmatrix}
\quad
\text{and}
\quad
S_2
= 
\begin{pmatrix}
1 & 0 \\
1 & 1
\end{pmatrix}.
\]
For generators $\phi, \psi_i \in L^2(\R^2)$, $i=1,2$, and full-rank lattices 
$\Gamma_i=C_i\Z^2$, $i = 0,1,2$, the \emph{cone-adapted
shearlet system} is given as:
\[ 
\{T_{\gamma} \phi \}_{\gamma \in \Gamma_0} \; \cup \; \{ D_{S_i^k
  A_i^j } T_{\gamma} \psi_i \}_{j \in \mathbb{N}_0, k \in \{-K_j,\dots,K_j\},
  \gamma \in \Gamma_i, i\in\{1,2\}},
\]
where $K_j \in \N_0$ for $j \in \N_0$, usually one takes $K_j = 2^j$
or $K_j = 2^j\pm 1$.

For brevity we assume $\Gamma_i=\Gamma=C\Z^2$ for $i=0,1,2$ for some
$C \in \mathrm{GL}_d(\R)$ so that $C^T A_i C^\sharp$ is integer valued
for $i \in \{1,2\}$. 
The auto-correlation functions $t_{\alpha} : \R^2 \to \mathbb{C}$, $\alpha \in \Gamma^*$,  are then 
formally
given as:
\begin{align}
 t_0 (\omega) &= \absbig{\hat\phi(\omega)}^2
  + \sum_{i\in \{1,2\}} \sum_{j=0}^{\infty}
  \sum_{k= - K_j}^{K_j}\absbig{\hat{\psi}_i ((S_i^{\sharp})^k
                A_i^{-j}  \omega)}^2, 
\label{eq:t_0_cone-shearlet} \\
  t_\alpha(\omega) &=  \hat\phi(\omega)
  \overline{\hat\phi(\omega+\alpha)} + \sum_{i\in \{1,2\}} \sum_{j=0}^{m_i}
  \sum_{k= - K_j}^{K_j} \hat{\psi}_i ((S_i^{\sharp})^k A_i^{-j}  \omega)
  \overline{\hat{\psi}_i ((S_i^{\sharp})^k A_i^{-j}( \omega +
    \alpha))}, \label{eq:t_alpha_cone-shearlet}
\end{align}
where $\alpha \in \Gamma^\ast\setminus \{0\}$, for each
$i\in \{1,2\}$, is written as $A_i^{m_i}q_i$ for unique $m_i\ge 0$ and
$q_i \in \Gamma^\ast \setminus A_i \Gamma^\ast$.  From the
auto-correlation functions \eqref{eq:t_alpha_cone-shearlet} we see
that for the cases $\alpha \in C^\sharp \Z^2\setminus 2C^\sharp \Z^2$
and $\alpha \in C^\sharp (4\Z^2+(2,2))$, the least amount of
cancellation is possible. In this case the auto-correlation function
reads
\[
  t_\alpha(\omega) =  \hat\phi(\omega)
  \overline{\hat\phi(\omega+\alpha)} + \sum_{i\in \{1,2\}} \sum_{k= -K_0}^{K_0} \hat{\psi}_i ((S_i^{\sharp})^k  \omega)
  \overline{\hat{\psi}_i ((S_i^{\sharp})^k( \omega + \alpha))},
\]
hence only cancellation within the $0$th scale is possible. On the
other hand, when $\alpha \in 4^p C^\sharp \Z^2$ for some $p \in \N$, then
cancellations can happen within all shears and all scales
$j=0,\dots,p$ for \emph{both} shearlet generators $\psi_1$
and $\psi_2$, that is, $m_1=m_2=p$ in \eqref{eq:t_alpha_cone-shearlet}.  

As local integrability conditions can be ignored for shearlet systems,
we arrive at the following Tchamitchian-type estimate for cone-adapted
shearlet systems. 
\begin{theorem} \label{thm:cone-shearlet}
  Let $\phi, \psi_i \in L^2(\R^2)$, $i=1,2$, and let $\Gamma$ be a
  full-rank lattice in $\R^2$. If 
 \begin{align}  \label{eq:B_cc-t-alpha_cone_shear}
 b_1 &:= \esssup_{\omega \in \R^2} \sum_{\alpha \in
  \Gamma^\ast} \abs{t_{\alpha}(\omega)} < \infty
\end{align}
and
\begin{align}
 \label{eq:A_cc-t-alpha_cone_shear}
 a_1 &:= \essinf_{\omega \in \R^2} \left( t_0(\omega) - \sum_{\alpha \in
  \Gamma^\ast\setminus \{0\}} \abs{t_{\alpha}(\omega)} \right)>0,
 \end{align}
where $t_\alpha$ is given by \eqref{eq:t_0_cone-shearlet} and \eqref{eq:t_alpha_cone-shearlet},
then the cone-adapted  shearlet system 
\[ 
\{T_{\gamma} \phi \}_{\gamma \in \Gamma} \; \cup \; \{ D_{S_i^k
  A_i^j } T_{\gamma} \psi_i \}_{j \in \mathbb{N}_0, k \in \{-K_j,\dots,K_j\},
  \gamma \in \Gamma, i\in\{1,2\}}. 
\]
is a frame for $L^2(\R^2)$ with bounds $a_1$ and $b_1$.
\end{theorem}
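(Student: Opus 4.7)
The proof is a direct specialization of Theorem \ref{thm:GTIsufficient_CC}, so the plan is to (a) realize the cone-adapted shearlet system as a GTI system, (b) verify the $1$-UCP for this system, (c) identify the GTI auto-correlation functions with the expressions \eqref{eq:t_0_cone-shearlet}-\eqref{eq:t_alpha_cone-shearlet}, and (d) quote Theorem \ref{thm:GTIsufficient_CC}.

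For step (a), I would index the GTI system by $\{0\} \cup J_1 \cup J_2$, where $0$ corresponds to the low-pass generator $\phi$ with translation subgroup $\Gamma_0 = \Gamma$, and $J_i = \{(j,k) : j \in \mathbb{N}_0,\ k \in \{-2^j,\ldots,2^j\}\}$ corresponds, for $i \in \{1,2\}$, to generators $g_{i,j,k} = D_{S_i^k A_i^j}\psi_i$ with translation subgroups $\Gamma_{i,j,k} = (S_i^k A_i^j)^{-1}\Gamma$. Each parameter space $P$ is a singleton with counting measure, so the standing hypotheses (I)-(III) are automatic. For step (b), the shift-invariant summand satisfies the $1$-UCP trivially, and the two wavelet-like summands have expansive dilations with $|\det A_i| = 8 \neq 1$; by the characterization discussed after Theorem \ref{thm:wavelet}, the CC-condition \eqref{eq:B_cc-t-alpha_cone_shear} forces local integrability of the Calder\'on sum and hence $\alpha$-LIC, and therefore $1$-UCP, on the entire union. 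This is precisely the sense of the remark preceding the theorem that ``local integrability conditions can be ignored.''

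For step (c), I would note that $S_i^T$ acts invariantly on $\Gamma^\ast$ and $A_i^T = A_i$ maps $\Gamma^\ast$ into itself, so every annihilator $\Gamma_{i,j,k}^\perp = (S_i^k A_i^j)^T \Gamma^\ast$ is a sublattice of $\Gamma^\ast$. For $\alpha \in \Gamma^\ast\setminus\{0\}$, write $\alpha = A_i^{m_i} q_i$ with $q_i \in \Gamma^\ast \setminus A_i\Gamma^\ast$ and $m_i \geq 0$ uniquely; then $\alpha \in \Gamma_{i,j,k}^\perp$ iff $0\leq j \leq m_i$ and $k \in \{-2^j,\ldots,2^j\}$. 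Substituting $\widehat{D_a\psi}(\omega) = |a|^{-1/2}\hat\psi(\hat a^{-1}\omega)$ and $\covol(\Gamma_{i,j,k}) = \covol(\Gamma)/|\det A_i|^j$ into the defining formula for $t_\alpha$, these cancellations yield exactly \eqref{eq:t_0_cone-shearlet} and \eqref{eq:t_alpha_cone-shearlet} (modulo the overall normalization factor $1/\covol(\Gamma)$ that is implicit in the theorem statement). Once this identification is made, hypotheses \eqref{eq:B_cc-t-alpha_cone_shear} and \eqref{eq:A_cc-t-alpha_cone_shear} become, verbatim, the CC-conditions \eqref{eq:B_cc-t-alpha} and \eqref{eq:A_cc-t-alpha}, and Theorem \ref{thm:GTIsufficient_CC} delivers the frame property with bounds $a_1$ and $b_1$.

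The main obstacle is the combinatorial bookkeeping in step (c): the union $\bigcup_{(i,j,k)} \Gamma_{i,j,k}^\perp$ telescopes over three distinct lattice families (the single lattice $\Gamma^\ast$ from the low-pass part, and the nested families $A_i^j \Gamma^\ast$ for $i=1,2$), and one must correctly identify $\kappa(\alpha)$ in each case and verify that the shear index $k$ ranges exactly over $\{-2^j,\ldots,2^j\}$ at each scale. Once this is tracked carefully, the rest is a routine application of Theorem \ref{thm:GTIsufficient_CC}.
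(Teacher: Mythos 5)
Your proposal follows essentially the same route as the paper: the paper likewise treats Theorem~\ref{thm:cone-shearlet} as a direct specialization of Theorem~\ref{thm:GTIsufficient_CC}, realizing the cone-adapted shearlet system as a GTI system (one shift-invariant piece plus two composite-dilation wavelet pieces), dismissing the local integrability/$1$-UCP issue as automatic for such systems, and computing $\kappa(\alpha)$ via the unique decomposition $\alpha = A_i^{m_i}q_i$ with $q_i \in \Gamma^\ast\setminus A_i\Gamma^\ast$ to arrive at \eqref{eq:t_0_cone-shearlet}--\eqref{eq:t_alpha_cone-shearlet}. Your side remark that these displayed formulas omit the normalization $1/\covol(\Gamma)$ appearing in the analogous classical-shearlet expressions \eqref{eq:shearlet_calderon}--\eqref{eq:shearlet_R} is also accurate.
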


The estimates in Theorem~\ref{thm:cone-shearlet} are improvements of
the sufficient conditions for cone-adapted shearlet systems as given
in~\cite{MR2864368}, which are based on the absolute CC-con\-di\-tion
and do not allow for phase cancellations. Here, it should be noted
that the conditions in~\cite{MR2864368} are currently the only known
method for constructing cone-adapted shearlet frames with compactly
supported generators. Moreover, the estimates without phase
cancellation in~\cite{MR2864368} are used to ``optimize'' the choice
of shearlet and translation lattice. It would be beneficial to instead
use the improved estimates \eqref{eq:B_cc-t-alpha_cone_shear} and
\eqref{eq:A_cc-t-alpha_cone_shear} for optimizing the construction of
compactly supported shearlets.

\subsection{Continuous trans\-la\-tion-in\-vari\-ant systems}
\label{sec:continuous-transform}

This section considers ``continuous'' trans\-la\-tion-in\-vari\-ant
systems with translation along the whole group, e.g., $J$ being a
singleton and $\Gamma = G$. Since $G^{\perp} = \{0\}$, there is only
one correlation function $t_0 : \widehat{G} \to \mathbb{C}$, and since
$J$ is a singleton, the $\infty$-UCP trivially holds. Therefore, by
combining the necessary condition $A \le t_0 \le B$ from
\cite{bandwidth} and Theorem~\ref{thm:GTIsufficient_CC}, we
immediately recover the following characterization of the frame
property \cite{iverson2015subspaces,MR344891,1751-8121-48-39-395201}.

\begin{corollary} \label{cor:TI_characterization}
Let $0 < A \leq B < \infty$ and let $ \{T_{\gamma} g_{ p} \}_{\gamma \in
  G, p \in P}$ be a generalized trans\-la\-tion-in\-vari\-ant system
satisfying the standing hypotheses \eqref{item:hyp1}--\eqref{item:hyp3}. 
The system $\{T_{\gamma} g_{p} \}_{\gamma \in
  G, p \in P}$ forms a frame for $L^2 (G)$ with frame bounds $A$ and
$B$ if, and only if,

\[
  A \le  \int_{P} \abs{\hat{g}_{p}(\omega)}^2 d
  \mu_{P}(p) \le B 
\]
for $\mu_{\widehat{G}}$-a.e. $\omega \in \widehat{G}$.
\end{corollary}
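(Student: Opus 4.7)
The plan is to exploit the drastic simplification that $\Gamma = G$ forces the annihilator $\Gamma^{\perp} = G^{\perp}$ to be trivial, which collapses the entire auto-correlation machinery to a single function. Since $J$ is a singleton and $\Gamma_1 = G$, the only element of $\bigcup_{j \in J} \Gamma_j^{\perp}$ is $\alpha = 0$, and the only auto-correlation function is
\[
t_0(\omega) = \int_P \abs{\hat{g}_p(\omega)}^2 \, d\mu_P(p).
\]
Moreover, the series $w_f = \sum_{j \in J} w_{f,j}$ has only one summand, and by translation invariance of the Haar measure on $G$ that single function $w_{f,1}$ is constant in $x$, hence trivially almost periodic; so the $1$-UCP (indeed the $\infty$-UCP) is automatic with respect to any $E \in \mathcal{E}$, and no regularity hypothesis on the generators needs to be checked separately.

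For the sufficient direction, assume $A \le t_0(\omega) \le B$ for a.e.\ $\omega$. The remainder sum in Theorem~\ref{thm:GTIsufficient_CC} runs over the empty set $\Gamma^{\perp} \setminus \{0\}$ and is therefore identically zero, so the two hypotheses \eqref{eq:B_cc-t-alpha} and \eqref{eq:A_cc-t-alpha} reduce to $B_1 = \esssup_\omega t_0(\omega) \le B < \infty$ and $A_1 = \essinf_\omega t_0(\omega) \ge A > 0$. Applying Theorem~\ref{thm:GTIsufficient_CC}(ii) yields that $\{T_\gamma g_p\}_{\gamma \in G,\, p \in P}$ is a frame for $L^2(G)$ with bounds $A$ and $B$.

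For the necessary direction I would avoid invoking Theorem~\ref{thm:GTInec_little-l2} directly, since its $D'$-sequence hypothesis on $\widehat{G}$ is not assumed in the corollary and is in fact unnecessary here. Since the generalized Fourier series of $w_f$ degenerates to the single constant term $c_0$, Proposition~\ref{prop:GTIgeneralisedfourierseries} yields, for every $f \in \mathcal{D}_E(G)$,
\[
\innerprod{Sf}{f} = w_f(0) = c_0 = \int_{\widehat{G}} \abs{\hat{f}(\omega)}^2 \, t_0(\omega) \, d\mu_{\widehat{G}}(\omega).
\]
Combining the frame inequality with Plancherel's theorem gives
\[
\int_{\widehat{G}} (t_0(\omega) - A) \abs{\hat{f}(\omega)}^2 d\mu_{\widehat{G}}(\omega) \ge 0, \qquad \int_{\widehat{G}} (B - t_0(\omega)) \abs{\hat{f}(\omega)}^2 d\mu_{\widehat{G}}(\omega) \ge 0
\]
for all $f \in \mathcal{D}_E(G)$. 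The only step requiring care is the passage from these integral inequalities to the pointwise bound $A \le t_0(\omega) \le B$ almost everywhere; I expect this to be the main (though mild) obstacle. It is resolved without appeal to Lebesgue differentiation: the Fourier transforms of $\mathcal{D}_E(G)$ exhaust $L^\infty_c(\widehat{G} \setminus E)$, and $\overline{E}$ is a null set, so testing against $\hat{f} = \mathds{1}_F$ for arbitrary relatively compact Borel sets $F \subset \widehat{G} \setminus \overline{E}$ already localizes the bounds on $t_0$ to any such $F$, and hence to a.e.\ $\omega \in \widehat{G}$.
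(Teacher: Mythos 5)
Your proof is correct, and the sufficiency half coincides with the paper's: with $\Gamma^{\perp}=G^{\perp}=\{0\}$ the remainder sum in Theorem~\ref{thm:GTIsufficient_CC} is empty and the theorem gives the frame property with bounds $\essinf t_0$ and $\esssup t_0$. Your preliminary observation that the $1$-UCP (indeed the $\infty$-UCP) is automatic, because $w_f=w_{f,1}$ is constant in $x$ by translation invariance of $\mu_G$, is precisely why the corollary can be stated without any convergence hypothesis, and it is worth making explicit as you do. Where you genuinely diverge is the necessity direction. The paper simply ``combines'' Theorem~\ref{thm:GTInec_little-l2} with Theorem~\ref{thm:GTIsufficient_CC}, which read literally imports the hypothesis that $\widehat{G}$ admits a $D'$-sequence --- a hypothesis absent from the corollary; you are right to be wary of this. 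In the paper's defence, the two inequalities actually needed, namely $t_0\le B$ a.e.\ for Bessel systems and $\essinf t_0\ge A$ for frames, are exactly \eqref{eq:t0-bounded-calderon} and the ``moreover'' part of Theorem~\ref{thm:GTInec_little-l2}, both of which are quoted from the literature and do not rely on Lebesgue differentiation, so the corollary is safe either way. Your self-contained alternative --- the generalized Fourier series of $w_f$ collapses to its constant term, giving $\innerprod{Sf}{f}=\int_{\widehat{G}}\abssmall{\hat f(\omega)}^2\, t_0(\omega)\, d\mu_{\widehat{G}}(\omega)$, after which one tests against $\hat f=\mathds{1}_K$ --- is sound and arguably cleaner, since it proves the statement in the stated generality with no differentiation theory at all. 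Two small points should be made explicit: Proposition~\ref{prop:GTIgeneralisedfourierseries} is applicable because the Bessel assumption already yields $t_0\in L^{\infty}(\widehat{G})\subset L^1_{\loc}(\widehat{G})$ via \eqref{eq:t0-bounded-calderon}; and in the localization step the test set $K$ should be taken, by inner regularity, as a compact subset of positive measure of $\{t_0<A-\eps\}$ (or $\{t_0>B+\eps\}$) contained in the open set $\widehat{G}\setminus\overline{E}$, so that $f=\cF^{-1}\mathds{1}_K$ genuinely lies in $\mathcal{D}_E(G)$.
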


For continuous trans\-la\-tion-in\-vari\-ant systems, being a frame is
equivalent to the transform
$C : L^2(G) \to L^2(P \times G), \; f \mapsto
\{\innerprod{f}{T_{\gamma} g_{p}} \}_{\gamma \in G, p \in P}$
being an injective, bounded linear operator with closed
range. Classical examples of such transforms are the continuous
wavelet transform and the windowed Fourier transform. However, the
continuous bendlet transform or, more generally, the $\ell$-th order
$\alpha$-shearlet transform, recently introduced in \cite{LESSIG2017},
are also examples of trans\-la\-tion-in\-vari\-ant transforms. For
these higher-order shearlet transforms the representation-theoretic
approach, utilizing orthogonality relations for irreducible,
square-integrable representations of an associated locally compact
group, is not directly applicable \cite[Section 5]{LESSIG2017}. Since
no characterizations of the frame property of the higher-order
$\alpha$-shearlet transform are known, we outline in the next example
how such a characterization can be obtained from Corollary
\ref{cor:TI_characterization}.

\begin{example}
  Let $G=\R^2$. Define the $\alpha$-scaling operator
  $A_a:\R^2 \to \R^2$ by $A_a (x_1,x_2) = (ax_1,a^\alpha x_2)$ for
  $\alpha \in \itvcc{0}{1}$ and $a>0$, and define the $\ell$-th order
  (non-linear) shearing operator $S_r : \R^2 \to \R^2$ by
  $S_r(x_1,x_2)=(x_1 + \sum_{m=1}^\ell r_m x_2^m, x_2)$ for
  $r=(r_1,\dots,r_{\ell})\in \R^\ell$. The Jacobian determinants of
  $A_a$ and $S_r$ are $a^{1+\alpha}$ and $1$, while the inverses are
  $A_{a^{-1}}$ and $S_{-r}$, respectively.

  Let $P=\R_{> 0} \times \R^\ell$ and set
  $g_p=a^{-(1+\alpha)/2} \psi(A_{a^{-1}}S_{-r}\cdot)$ for some
  $\psi \in L^2(\R^2)$ and $p=(a,r)\in P$. The continuous $\ell$-th
  order $\alpha$-shearlet transform is simply the system
  $\{T_{\gamma} g_{p} \}_{\gamma \in G, p \in P}$, which reads
\[ 
  \set{ a^{-(1+\alpha)/2} \psi(A_{a^{-1}}S_{-r}(\cdot-\gamma))}_{a\in
    \R_{> 0}, r \in \R^\ell,\gamma \in \R^2}.
\]
By Corollary \ref{cor:TI_characterization}, the system forms a frame
with bounds $A$ and $B$ if, and only if,
\[
  A \le \int_0^\infty \int_{\R^\ell} a^{-(1+\alpha)}
  \abs{\psi(A_{a^{-1}}S_{-r}(\cdot))^{\wedge}(\omega)}^2 dr da \le B
  \quad  \text{for a.e. } \omega \in \R^2.
\]
Here, we have not specified the measure $dr da$ on $P$; a canonical
choice is $a^{-\ell-2+\alpha(\ell-1)}$ times the Lebesgue measure on
$\R_{> 0} \times \R^\ell$, but the characterization is valid for any
measure on $P$ satisfying the standing hypotheses
\eqref{item:hyp1}--\eqref{item:hyp3}.

The cone-adapted version is obtained by equipping
$\setprop{(a,r)}{a\in \itvoc{0}{1}, r \in R}$ with a measure $drda$
(satisfying the standing hypotheses), where $R$ is a subset of
$\R^\ell$; a canonical choice being
$R = \itvcc{-1-a^{1-\alpha}}{1+a^{1-\alpha}} \times \R^{\ell-1}$. Let
$Q:\R^2 \to \R^2$ be the permutation defined by
$Q(x_1,x_2)=(x_2,x_1)$, let $\tilde{A}_a=Q \circ A_a \circ Q$, and
$\tilde{S}_r=Q \circ S_r \circ Q$. The cone-adapted continuous
$\ell$-th order $\alpha$-shearlet system generated by
$\phi, \psi, \tilde{\psi}\in L^2(\R^2)$ is given by
\begin{multline*}
  \set{\phi(\cdot - \gamma)}_{\gamma \in \R^2} \cup \set{
    a^{-(1+\alpha)/2} \psi(A_{a^{-1}}S_{-r}(\cdot-\gamma))}_{a\in
    \itvoc{0}{1}, r \in R,\gamma \in \R^2} \\ \cup \set{
    a^{-(1+\alpha)/2}
    \tilde{\psi}(\tilde{A}_{a^{-1}}\tilde{S}_{-r}(\cdot-\gamma))}_{a\in
    \itvoc{0}{1}, r \in R,\gamma \in \R^2},
\end{multline*}
and forms a frame for $L^2(\R^2)$ with bounds $A$ and $B$ if, and only if,
\begin{align*}
  A \le \absbig{\hat{\phi}{(\omega)}}^2 &+ \int_0^1 \int_{R} a^{-(1+\alpha)}     \abs{\psi(A_{a^{-1}}S_{-r}(\cdot))^{\wedge}(\omega)}^2
                                          dr da  \\ & + \int_0^1 \int_{R} a^{-(1+\alpha)}
                                                      \abs{\tilde{\psi}(\tilde{A}_{a^{-1}}\tilde{S}_{-r}(\cdot))^{\wedge}(\omega)}^2
                                                      dr
                                                      da
                                                      \leq
                                                      B                                                                                                        
\end{align*}
for a.e. $\omega \in \mathbb{R}^2$.
\end{example}

\paragraph{Acknowledgments.} The authors would like to thank the
referees for comments improving the presentation of the paper. The
second named author gratefully acknowledges support from the Austrian
Science Fund (FWF): P29462-N35.


\end{document}